\newtheorem{lemma}{Lemma}
\newtheorem{thm}{Theorem}
\newtheorem{cor}{Corollary}
\newtheorem{ass}{Assumption}
\theoremstyle{definition}
\newtheorem{remark}{Remark}
	\title{Morse index and determinant of block Jacobi matrices via optimal control}
\author{Stefano Baranzini \footnote{Università degli studi di Torino,  Via Carlo Alberto 10, Torino, Italy } \and Ivan Beschastnyi \footnote{Univesidade de Aveiro, Campus Santiago, Aveiro, Portugal}}
\begin{document}
	\maketitle
%\begin{frontmatter}
	
	%% Title, authors and addresses
	
	%% use the tnoteref command within \title for footnotes;
	%% use the tnotetext command for theassociated footnote;
	%% use the fnref command within \author or \address for footnotes;
	%% use the fntext command for theassociated footnote;
	%% use the corref command within \author for corresponding author footnotes;
	%% use the cortext command for theassociated footnote;
	%% use the ead command for the email address,
	%% and the form \ead[url] for the home page:
	%% \title{Title\tnoteref{label1}}
	%% \tnotetext[label1]{}
	%% \author{Name\corref{cor1}\fnref{label2}}
	%% \ead{email address}
	%% \ead[url]{home page}
	%% \fntext[label2]{}
	%% \cortext[cor1]{}
	%% \affiliation{organization={},
		%%             addressline={},
		%%             city={},
		%%             postcode={},
		%%             state={},
		%%             country={}}
	%% \fntext[label3]{}

	  %\ead{ibeschastnyi@ua.pt}
%     %\affiliation[aveiro]{organization={Univesidade de Aveiro},
%		            addressline={Campus Santiago},
%		            city={Aveiro},
%		             postcode={3810-193},
%		             %state={},
%		             country={Portugal}}

%		 	  \ead{sbaranzi@sissa.it}
% \affiliation[torino]{organization={Università degli studi di Torino},
%		             addressline={Via Carlo Alberto 10},
%		             city={Torino},
%		             postcode={10124},
%		             %state={},
%		             country={Italy}}
%	
	%%
	%% \affiliation[label2]{organization={},
		%%             addressline={},
		%%             city={},
		%%             postcode={},
		%%             state={},
		%%             country={}}

	\begin{abstract}
		%% Text of abstract
		 We describe the relation between block Jacobi matrices and minimization problems for discrete time optimal control problems. Using techniques developed for the continuous case, we provide new  algorithms to compute spectral invariants of block Jacobi matrices. Some examples and applications are presented. 		
	\end{abstract}

%	\begin{keyword}
%		%% keywords here, in the form: keyword \sep keyword
%		Jacobi matrix \sep Discrete systems \sep Morse index \sep Determinant \sep Toepliz matrix.
%		%% PACS codes here, in the form: \PACS code \sep code
%		
%		%% MSC codes here, in the form: \MSC code \sep code
%		%% or \MSC[2008] code \sep code (2000 is the default)
%		
%	\end{keyword}
%	
%\end{frontmatter}

%% \linenumbers

    \section{Introduction}
    \label{sec:intro}
\subsection{Motivation}   
    \label{sec:motivation}
The goal of this paper is to explore an interesting connection between block Jacobi matrices and a class of discrete optimal control problems. This gives effective algorithms for computing the determinant, the negative inertia index and more generally the number of eigenvalues smaller than some threshold $\lambda^* \in \mathbb{R}$  of large block Jacobi matrices. 

Recall that a block Jacobi matrix $\mathcal{I}$ is a matrix of the form
    \begin{equation}
       \label{eq: jacobi_matrix_blocks}
       {\mathcal{I}} = \begin{pmatrix}
S_1 & R_1 & 0 & \dots & 0 & 0\\
R_1^t & S_2 & R_2 & \dots & 0& 0\\
0 & R_2^t & S_3 & \dots & 0& 0\\
\vdots & \vdots& \vdots & \ddots & \vdots & \vdots\\
0 & 0 & 0 & \dots & S_{N-1} & R_{N-1}\\
0 & 0 & 0 & \dots & R^t_{N-1} & S_N
        \end{pmatrix},
    \end{equation}
where $N\in \mathbb{N}$, $S_k$ are symmetric matrices of order $n$. Usually it is also assumed that $R_k$ are positive symmetric matrices of the same order. Here, we only require them to be non-degenerate, i.e. $R_k \in GL(n,\mathbb{R})$.

Jacobi matrices find applications in numerical analysis \cite{motiveation_numerical}, statistical physics \cite{ motivation_ising}, knot theory \cite{motivation_knots} and many other areas. Moreover, any symmetric matrix can be put in a tridiagonal form using the Householder method \cite{numanal_book}. Therefore, understanding their spectral properties is an extremely important topic. 

 In this article we establish a correspondence between Jacobi matrices and discrete linear quadratic regulator (LQR) problem. Then we show how the general machinery of optimal control theory allows us to compute the index and the determinant of such matrices. Similar formulas for determinants also appeared in \cite{det_old}, see also  \cite{SCHULZBALDES2012498,schulz2020space} for a geometric approach to the problem.
 
The LQR approach nevertheless has its benefits. In particular, as a small application of our method we give a novel proof of the generalized Euler identity
\begin{equation}
\label{eq:euler_agrachev}
\frac{a \sin b}{b\sinh a } = \prod_{j=1}^\infty\left(1-\frac{a^2+b^2}{a^2 + (\pi j)^2} \right)
\end{equation}
using only methods of linear algebra and basic analysis.

In order to state the main results, let us introduce the necessary notations. Consider the following discrete differential equation:
     \begin{equation}
     	\label{eq: difference eq}
     	x_{k+1}-x_k = A_k x_k +B_k u_k, \qquad k=0,\dots, N+1,
     \end{equation}
Where $A_k$ and $B_k$ are $n \times n$ matrix, $B_k$ are invertible and $x_k,u_k\in \mathbb{R}^n$. Variables $x_k$ are called \emph{state} variables while $u_k$ are the \emph{controls}. We denote by  $x = (x_k)$ and $u = (u_k)$ the vectors having $x_k$ and $u_k$ respectively as $k$-th component.  Note that we will often consider them as elements of the space of functions on a finite set of points with values in $\mathbb R^n$, clearly isomorphic to $\mathbb{R}^{n (N+2)}$  and $\mathbb{R}^{n (N+1)}$ correspondingly. 
On the space of solutions of \cref{eq: difference eq} with fixed initial condition $x_0$ we consider the following quadratic functional given by:
    \begin{equation}
        \label{eq: functional}
    	\mathcal{J}(u) = \frac{1}{2}\sum_{i=0}^N \left (\vert u_i\vert ^2- \langle Q_i x_i,x_i\rangle \right),
    \end{equation}
where $Q_i$ are some symmetric $n\times n$ matrices.

The problem of minimizing $\mathcal{J}$ over all possible discrete trajectories of~\eqref{eq: difference eq} is the classical linear quadratic regulator problem, which is one of the central problems in optimal control theory. Since we assume that $B_k$ are invertible, we can resolve the constraints~\eqref{eq: difference eq} with respect to $u_k$ and plug them into~\eqref{eq: functional}. This yields
     \begin{equation*}
    	\mathcal{J}(x) =  \frac{1}{2}\left (\sum_{i=0}^N \vert B_i^{-1}(x_{i+1}-(1+A_i)x_i)\vert ^2- \sum_{i=0}^{N} \langle Q_i x_i,x_i\rangle \right).
     \end{equation*} 
If we assume additionally that $x_0 = 0 = x_{N+1}$ and write down the matrix associated to the quadratic form above, we find that it is given by a block Jacobi matrix~\eqref{eq: jacobi_matrix_blocks}. Indeed, set 
$$
\Gamma_k^{-1} := (B_k^{-1})^tB_k^{-1}, 
$$ 
which is a symmetric non degenerate matrix. Then the functional $2\mathcal J$ is the represented by the symmetric matrix~\eqref{eq: jacobi_matrix_blocks} with:
   \begin{align}
   	S_k &= \Gamma_{k-1}^{-1}+(1+A_k)^t\Gamma_{k}^{-1}(1+A_k)-Q_k, \nonumber\\
   	 R_k &= -(1+A_k)^t \Gamma_k^{-1}, \label{eq:jacobi_transform}\\
   	 \,  &1 \le k \le N-1.\nonumber 
   \end{align}

We now see that to each LQR problem~\eqref{eq: difference eq}-\eqref{eq: functional} we can associate a Jacobi matrix. The converse is also true. However, there are several LQR problems which correspond to the same Jacobi matrix. We can get rid of this ambiguity by choosing fixed invertible $B_k$, $0\leq k \leq n$ and an arbitrary matrix $A_0$. Thus to every LQR problem~\eqref{eq: difference eq}-\eqref{eq: functional} one can associate a Jacobi matrix and vice versa. Fixing $B_k$, $0\leq k \leq n$ and $A_0$ beforehand makes this correspondence an affine bijection.

\subsection{Main results}
\label{sec:main results}
Through out the paper we will make the following assumption. It is equivalent to requiring $R_k \in GL(n,\mathbb{R})$ in \eqref{eq:jacobi_transform}.
\begin{ass}
	\label{assumption:invertibility}
	The matrices $B_k$ and $1+A_k$ in \eqref{eq: difference eq} are invertible for every $k$. 
\end{ass} 

Similarly to the case of continuous optimal control problems, one can characterize the extremal points of the functional~\eqref{eq: functional} under the constraints~\eqref{eq: difference eq} as solutions to a system of difference equations.
To be more precise define the following matrices for $0\leq i\leq j\leq N+1$ 
    \begin{equation*}
    	P_i^{j} : = \prod_{r = i}^{j-1}(1+A_r), \quad P_j^j = 1.
     \end{equation*}
and for $0 \leq i<j \leq N+1$ we take:
$$
P_j^i = (P_i^j)^{-1}. 
$$  
Next, we define $2n\times 2n$ matrices:
\begin{align*}
M_k = 
	\begin{pmatrix}
	(P^k_{k+1})^t  &0\\
	\Gamma_k(P^k_{k+1})^t & P^{k+1}_k
\end{pmatrix} \begin{pmatrix}
	1 & -Q_k \\ 0 &1
\end{pmatrix}
\end{align*}
and
$$
\Phi_{k} = \begin{pmatrix}
		\Phi^1_k & \Phi^2_k \\\Phi^3_k & \Phi^4_k
	\end{pmatrix} := \prod_{i=0}^{k-1} M_{i}.
$$
Matrices $\Phi_k$ correspond to the flow of the extremal equations that we will derive in Section~\ref{section:lagrange multiplier} using the Lagrange multiplier rule.
Now we can formulate the two main results of this paper.

\begin{thm}
\label{thm:index}
Let $\mathcal{J}$ be as in~\eqref{eq: functional} and consider constraints as in~\eqref{eq: difference eq}. The negative inertia index of $\mathcal{J}$ restricted to solutions of~\eqref{eq: difference eq} with $x_0=x_{N+1}=0$ is given by the following formula:
\begin{equation}
\label{eq:index_formula}
ind^- \mathcal{J} = \sum_{k=1}^{N}ind^-  \mathcal{Q}_k^\perp + \dim \ker \Phi^3_k, 
\end{equation}
where
$$
\mathcal{Q}_k^\perp = (\Phi_{k}^3)^t\left((P^{k+1}_{k})^{t}\Gamma_{k}^{-1}(P^{k+1}_{k})-Q_{k} \right)\Phi_{k}^3 + (\Phi_{k}^3)^t\Phi_{k}^1
$$
\end{thm}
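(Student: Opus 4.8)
The plan is to compute the negative inertia index of $\mathcal J$ by the standard dynamic-programming / iterative reduction philosophy of the continuous LQR problem, adapted to the discrete setting. First I would apply the Lagrange multiplier rule (which the authors defer to Section~\ref{section:lagrange multiplier}) to the constrained functional, obtaining the discrete Hamiltonian system whose transition maps are exactly the $2n\times 2n$ matrices $M_k$; the accumulated flow $\Phi_k = \prod_{i=0}^{k-1}M_i$ then propagates the pair $(x_k,\,p_k)$ of state and costate. The key structural fact I would establish is that the blocks $\Phi_k^1,\dots,\Phi_k^4$ encode the solution of the problem on the truncated time interval $\{0,\dots,k\}$ with $x_0=0$: solutions starting at $x_0=0$ with costate $p_0=\xi$ have $x_k = \Phi_k^2\xi$ and $p_k = \Phi_k^4\xi$, while the subspace $\operatorname{Im}\Phi_k^3$ (image of the $(3,\cdot)$ block acting on the $x_0$ slot, appropriately) records the "focal" directions. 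This is the discrete analogue of the Jacobi/conjugate-point picture.

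Next I would set up the inductive step. Write $\mathcal J = \mathcal J_k + (\text{remainder})$ where $\mathcal J_k$ is the part of the sum involving $u_0,\dots,u_{k-1}$ and the first $k$ state variables, and minimize out the innermost variable (or equivalently perform a block Gaussian elimination on the Jacobi matrix $\mathcal I$ from the top-left corner). At each stage the Schur complement of the leading block produces a symmetric matrix whose negative index contributes $\operatorname{ind}^-\mathcal Q_k^\perp$, and whenever the pivot block is degenerate one picks up an extra $\dim\ker\Phi_k^3$ from the null directions that must be carried forward — this is precisely the combinatorial bookkeeping of a "conjugate point" of multiplicity $\dim\ker\Phi_k^3$. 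The explicit form of $\mathcal Q_k^\perp$, namely $(\Phi_k^3)^t\big((P_k^{k+1})^t\Gamma_k^{-1}P_k^{k+1}-Q_k\big)\Phi_k^3 + (\Phi_k^3)^t\Phi_k^1$, should drop out by substituting the definitions of $M_k$, using the cocycle/symplecticity relations among the $P_i^j$ (in particular $P_i^jP_j^k = P_i^k$ and $P_j^i = (P_i^j)^{-1}$), and identifying the reduced quadratic form on the residual control with this expression. I would verify the symplectic (or at least $J$-orthogonality, hence the superscript $\perp_{\mathcal J}$) property of the $M_k$ to guarantee the algebra closes.

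Finally I would assemble the pieces: summing the contributions $\operatorname{ind}^-\mathcal Q_k^\perp + \dim\ker\Phi_k^3$ over $k=1,\dots,N$ telescopes to $\operatorname{ind}^-\mathcal J$, using that the elimination terminates (the last Schur complement is exhausted by the constraint $x_{N+1}=0$) and that each elimination step neither creates nor destroys negative directions beyond those accounted for. The main obstacle I anticipate is the careful treatment of the degenerate pivots: when $\Phi_k^3$ has nontrivial kernel the naive Schur complement does not exist, so one must either perturb $\lambda$ and take a limit (a continuity/semicontinuity argument for the inertia index) or work with a generalized Schur complement on the orthogonal complement of the kernel and check that the discarded directions contribute exactly $\dim\ker\Phi_k^3$ and nothing to $\operatorname{ind}^-$. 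Getting this multiplicity bookkeeping exactly right — rather than off by the dimension of some kernel — is the delicate point, and it is where the precise definition of $\mathcal Q_k^\perp$ (including the $(\Phi_k^3)^t\Phi_k^1$ term, which is the discrete analogue of the derivative of the focal family) must be used rather than any cruder surrogate.
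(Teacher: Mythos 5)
Your outline identifies the right objects (the Hamiltonian flow $\Phi_k$, an iterative elimination scheme, a reduced form at each step) and the elimination philosophy is indeed cousin to what the paper does, but as it stands the proposal has a genuine gap: the two steps that actually constitute the proof are only announced, not carried out. First, the identity between the reduced form at stage $k$ (your Schur complement pivot) and the explicit expression $(\Phi_{k}^3)^t\bigl((P^{k+1}_{k})^{t}\Gamma_{k}^{-1}P^{k+1}_{k}-Q_{k}\bigr)\Phi_{k}^3 + (\Phi_{k}^3)^t\Phi_{k}^1$ is asserted to ``drop out by substituting the definitions,'' but this is the computational heart of the theorem. In the paper it is obtained by first proving that the restriction of $\mathcal Q_k$ to the $\mathcal Q_{k+1}$-orthogonal complement of $V_{k-1}=U_{k-1}\cap\ker E^{k}$ coincides with the Maslov triple form $m(\Phi_k(\Pi),M_k^{-1}(\Pi),\Pi)$, using the control law $u_k=B_k^t\lambda_{k+1}$ and the characterization of $V_{k-1}^\perp$, and only then parametrizing by the initial covector $\lambda_0$. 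Note also that with the paper's conventions the flow acts on $(\lambda,x)$, so a trajectory with $x_0=0$ has $x_k=\Phi_k^3\lambda_0$ and $\lambda_k=\Phi_k^1\lambda_0$; your identification $x_k=\Phi_k^2\xi$, $p_k=\Phi_k^4\xi$ is inconsistent with the blocks appearing in the formula you must reproduce, and this mislabeling would derail the substitution you propose.

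Second, the degenerate-pivot bookkeeping, which you yourself flag as ``the delicate point,'' is exactly what cannot be left to a limiting or generic-perturbation argument without further work: one must show that each degeneracy contributes $\dim\ker\Phi_k^3$ to the count and nothing else is carried forward. The paper handles this by an inertia-additivity lemma for a quadratic form restricted to a subspace and its $\mathcal Q$-orthogonal, whose correction term is $\dim\bigl((V_k\cap V_k^{\perp})/(V_k\cap V_k^{\perp}\cap\ker\mathcal Q_{k+1})\bigr)$; the identification $V_k\cap V_k^\perp=\ker\mathcal Q_k\cong\Phi_{k+1}(\Pi)\cap\Pi$ gives the $\dim\ker\Phi^3$ term, and the vanishing of the triple intersection follows from the transversality $M_{k+1}^{-1}(\Pi)\cap\Pi=0$, which is precisely where Assumption~\ref{assumption:invertibility} (invertibility of $\Gamma_{k+1}$ and $P^{k+1}_{k+2}$) enters. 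Without an argument replacing this transversality statement, your claim that the discarded directions contribute exactly $\dim\ker\Phi_k^3$ is unsupported, and a continuity-of-inertia argument alone does not control the kernel directions (the index is only lower semicontinuous, so perturbing and passing to the limit can miscount by exactly these kernel dimensions). Supplying these two pieces would essentially reconstruct the paper's proof.
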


Notice that we have reduced the problem of computing the negative inertia index of a $nN\times nN$ matrix to computing the inertia of  $N$ matrices of dimension $n\times n$, which  greatly reduces  the dimensionality of the problem. In the same spirit, we also prove a formula for the determinant.

\begin{thm}
\label{thm:det}
Let $\mathcal{J}$ be as in~\eqref{eq: functional} and consider constraints as in~\eqref{eq: difference eq}. The determinant of $\mathcal{J}$ restricted to solutions of~\eqref{eq: difference eq} with $x_0=x_{N+1}=0$ is given by the following formula:
\begin{equation}
	\label{eq:det_formula}	
	\det (\mathcal J) = 2^{-nN}\det(\Phi_3)\det(P_0^{N+1}) \prod_{k=0}^{N}\det\Gamma_k^{-1}.
\end{equation}
\end{thm}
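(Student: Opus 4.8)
The plan is to reduce the statement to a computation of $\det\mathcal I$, where $\mathcal I$ is the Jacobi matrix \eqref{eq: jacobi_matrix_blocks}. Since $\mathcal J$ restricted to $\{x_0=x_{N+1}=0\}$ is represented, in the coordinates $(x_1,\dots,x_N)$, by $\tfrac12\mathcal I$, we have $\det\mathcal J=2^{-nN}\det\mathcal I$, so it suffices to prove $\det\mathcal I=\det(\Phi^3_{N+1})\det(P_0^{N+1})\prod_{k=0}^N\det\Gamma_k^{-1}$ (so that $\Phi_3$ in \eqref{eq:det_formula} is $\Phi^3_{N+1}$). I would first establish this under the extra hypothesis that $\Phi^3_2,\dots,\Phi^3_N$ are invertible --- the discrete analogue of ``no conjugate points'' --- and then remove the hypothesis.

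Under that hypothesis all leading principal block minors of $\mathcal I$ are invertible, so block Gaussian elimination gives $\det\mathcal I=\prod_{k=1}^N\det\Delta_k$, where the Schur complements satisfy $\Delta_1=S_1$ and $\Delta_k=S_k-R_{k-1}^t\Delta_{k-1}^{-1}R_{k-1}$. The conceptual content is that $\Delta_k$ is the Hessian of the cost-to-come --- the partial minimum of $2\mathcal J$ over $x_1,\dots,x_{k-1}$ with $x_0=0$ and $x_k$ fixed --- and is therefore governed by the extremal flow. Concretely, I would set $E_k:=\Phi^1_k(\Phi^3_k)^{-1}$ and prove by induction on $k$ that $\Delta_k=E_k+(1+A_k)^t\Gamma_k^{-1}(1+A_k)-Q_k$. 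The base case follows from $\Phi_1=M_0$, which gives $\Phi^3_1=\Gamma_0(1+A_0)^{-t}$ and $\Phi^1_1=(1+A_0)^{-t}$, hence $E_1=\Gamma_0^{-1}$, together with the value of $S_1$ in \eqref{eq:jacobi_transform}. For the inductive step one substitutes the expressions \eqref{eq:jacobi_transform} for $S_{k+1}$ and $R_k$ into the Schur recursion and uses $\Phi_{k+1}=M_k\Phi_k$: multiplying out the $2\times2$ block form of $M_k$ gives $\Phi^1_{k+1}=(1+A_k)^{-t}(\Phi^1_k-Q_k\Phi^3_k)$ and, with the inductive hypothesis, the key identity $\Phi^3_{k+1}=\Gamma_k(1+A_k)^{-t}\Delta_k\Phi^3_k$; substituting these into the recursion reproduces the asserted form of $\Delta_{k+1}$ and closes the induction.

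Taking determinants in $\Phi^3_{k+1}=\Gamma_k(1+A_k)^{-t}\Delta_k\Phi^3_k$ gives $\det\Delta_k=\det(1+A_k)\det\Gamma_k^{-1}\,\det\Phi^3_{k+1}/\det\Phi^3_k$, and multiplying over $k=1,\dots,N$ the $\Phi^3$-factors telescope to $\det\Phi^3_{N+1}/\det\Phi^3_1=\det\Phi^3_{N+1}\det(1+A_0)/\det\Gamma_0$. Since $\prod_{k=0}^N\det(1+A_k)=\det P_0^{N+1}$, this yields $\det\mathcal I=\det\Phi^3_{N+1}\det(P_0^{N+1})\prod_{k=0}^N\det\Gamma_k^{-1}$, which is the claim. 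To drop the invertibility hypothesis, note that with $A_k$ and $B_k$ fixed both sides of this identity are polynomial in the entries of $Q_0,\dots,Q_N$ and agree on the locus where $\Phi^3_2,\dots,\Phi^3_N$ are invertible; this locus is non-empty --- for example, taking all $Q_i$ equal to a large negative multiple of the identity makes $\mathcal J$ positive definite on every subinterval, so no conjugate points occur --- hence Zariski-dense, and the identity holds for all $Q_i$.

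I expect the only genuine obstacle to be precisely this degenerate case: the block elimination breaks down when some intermediate $\Phi^3_k$ is singular, and one must fall back on the polynomiality argument (equivalently, approximate a degenerate $\mathcal I$ by non-degenerate Jacobi matrices and pass to the limit, both sides being continuous). Everything else is a bookkeeping of the $2\times2$ block structure of the $M_k$ and of the Schur-complement recursion.
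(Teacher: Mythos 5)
Your proof is correct, but it takes a genuinely different route from the paper's. You factor the Jacobi matrix by block Gaussian elimination and show that the Schur complements satisfy a discrete Riccati identity, $\Delta_k=\Phi^1_k(\Phi^3_k)^{-1}+(1+A_k)^t\Gamma_k^{-1}(1+A_k)-Q_k$, equivalently $\Phi^3_{k+1}=\Gamma_k(1+A_k)^{-t}\Delta_k\Phi^3_k$ (this does check out against $\Phi_{k+1}=M_k\Phi_k$ and the expressions \eqref{eq:jacobi_transform}), so that $\det\mathcal I=\prod_k\det\Delta_k$ telescopes to $\det\Phi^3_{N+1}\det(P_0^{N+1})\prod_k\det\Gamma_k^{-1}$, and degenerate intermediate $\Phi^3_k$ are removed by polynomial continuation in the $Q_i$ (your non-emptiness argument is fine; even $Q_i=0$ works). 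The paper never opens up the internal structure of $\mathcal I$: it considers the pencil $\mathcal I_s=T-sQ$ and the flow $\Phi(s)$, shows via Lemma~\ref{lemma:char_polynomial} (which rests on Lemma~\ref{lemma: jacobi equation}) that $\det\Phi_3(s)$ and $\det(T-sQ)$ are polynomials in $s$ of the same degree with the same zero set, hence proportional, and then computes the proportionality constant at $s=0$ by evaluating $\Phi(0)$ explicitly and computing $\det T$ through the factorization $T=L^t(1+F_N^*F_N)L$. Your approach is more elementary and local: it is essentially a discrete Gelfand--Yaglom/Riccati argument, it exhibits the Schur complements as the Riccati flow $\Phi^1_k(\Phi^3_k)^{-1}$ of the extremal system, and it avoids the root-matching (and implicit multiplicity) bookkeeping of the pencil argument; the paper's route, in exchange, needs no conjugate-point hypothesis or continuation step at all, and the deformation in $s$ it uses is the same one that drives the index theorem and the spectral applications later in the paper. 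Two small presentational points in your write-up, neither affecting correctness: the claim that your hypothesis makes all leading principal block minors of $\mathcal I$ invertible is itself a consequence of the induction (the $kn\times kn$ leading minor has determinant $\prod_{j\le k}\det\Delta_j$, linked to $\det\Phi^3_{k+1}$ by your key identity, or alternatively of the remark after Lemma~\ref{lemma: jacobi equation}), so it should be folded into the inductive loop rather than asserted beforehand; and only $Q_1,\dots,Q_N$ enter the continuation argument, $Q_0$ being irrelevant because $x_0=0$.
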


	Let us rephrase the two statements in terms of Jacobi matrices of the form \eqref{eq: jacobi_matrix_blocks}. We can choose $A_0 =0$ and $\Gamma_k =1 $ for all $0\le k \le N$ and obtain the following relations from \cref{eq:jacobi_transform}  for $1\le k\le N-1$:
	\begin{equation*}
		\begin{cases}
			Q_k = 1-S_k+R_kR^t_k,\\
			P_k^{k+1} = -R_k^t,
		\end{cases}
	\end{equation*}
    \begin{equation*}
        M_k = \begin{pmatrix}
		-R^{-1}_k & R_k^{-1}(1-S_k+R_kR_k^t) \\ -R_k^{-1} &R_k^{-1}(1-S_k)
	\end{pmatrix}  .  
	\end{equation*}

\begin{cor}
	Index and determinant of the matrix $\mathcal{I}$ defined in \cref{eq: jacobi_matrix_blocks} can be computed as:
	\begin{align*}
		ind^- \mathcal{I} &= \sum_{k=1}^{N}ind^-  \mathcal{Q}_k^\perp + \dim \ker \Phi^3_k,\\
		\det (\mathcal I) &= (-1)^{nN} \det(\Phi_3)\prod_{k=1}^{N}\det R_k.
	\end{align*}
where $	\mathcal{Q}_k^\perp = (\Phi_{k}^3)^t\left(S_k-1\right)\Phi_{k}^3 + (\Phi_{k}^3)^t\Phi_{k}^1 $.
\end{cor}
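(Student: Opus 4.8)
The plan is to derive the Corollary as a direct specialization of Theorem~\ref{thm:index} and Theorem~\ref{thm:det} under the substitution $A_0 = 0$ and $\Gamma_k = 1$ for all $0 \le k \le N$. First I would verify that this choice is legitimate: by the discussion in Section~\ref{sec:motivation}, fixing the $B_k$ (equivalently the $\Gamma_k$) and $A_0$ turns the LQR-to-Jacobi correspondence into an affine bijection, so for any prescribed Jacobi matrix $\mathcal{I}$ of the form~\eqref{eq: jacobi_matrix_blocks} with $R_k \in GL(n,\mathbb{R})$ there is a unique LQR problem~\eqref{eq: difference eq}--\eqref{eq: functional} with $\Gamma_k = 1$, $A_0 = 0$ realizing it, and the identification of $2\mathcal{J}$ with $\mathcal{I}$ from~\eqref{eq:jacobi_transform} means $\mathcal{I} = 2\mathcal{J}$ as quadratic forms on the space $\{x : x_0 = x_{N+1} = 0\}$. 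With $\Gamma_k = 1$, the relations~\eqref{eq:jacobi_transform} reduce to $S_k = 1 + (1+A_k)^t(1+A_k) - Q_k$ and $R_k = -(1+A_k)^t$; solving these gives $1 + A_k = -R_k^t$ (hence $P_k^{k+1} = 1 + A_k = -R_k^t$, consistent with the stated formula), and $Q_k = 1 - S_k + R_k R_k^t$, exactly the boxed system preceding the Corollary. Substituting these into the definition of $M_k$ and using $P^k_{k+1} = (1+A_k)^{-1} = -(R_k^t)^{-1} = -(R_k^{-1})^t$... wait, I need $P^k_{k+1}$; since $P_k^{k+1} = 1+A_k$ we get $P^k_{k+1} = (1+A_k)^{-1}$, and $(P^k_{k+1})^t = ((1+A_k)^t)^{-1} = (-R_k)^{-1} = -R_k^{-1}$, which reproduces the displayed formula for $M_k$ after multiplying out the two $2\times 2$ factors.

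Next I would treat the index formula. Since $\mathcal{I} = 2\mathcal{J}$ and scaling a symmetric matrix by a positive constant does not change its negative inertia index, $ind^-\mathcal{I} = ind^-\mathcal{J}$, and Theorem~\ref{thm:index} applies verbatim. It only remains to simplify $\mathcal{Q}_k^\perp$ under the present choices. We have $P^{k+1}_k = (P^k_{k+1})^{-1} = (1+A_k) = -R_k^t$, so $(P^{k+1}_k)^t \Gamma_k^{-1} (P^{k+1}_k) = (-R_k)(-R_k^t) = R_k R_k^t$; then the middle matrix becomes $R_k R_k^t - Q_k = R_k R_k^t - (1 - S_k + R_k R_k^t) = S_k - 1$. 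Hence $\mathcal{Q}_k^\perp = (\Phi_k^3)^t(S_k - 1)\Phi_k^3 + (\Phi_k^3)^t \Phi_k^1$, which is precisely the expression in the Corollary, and the sum $\sum_k (ind^-\mathcal{Q}_k^\perp + \dim\ker\Phi_k^3)$ is unchanged.

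For the determinant, I would start from~\eqref{eq:det_formula}: $\det(\mathcal{J}) = 2^{-nN}\det(\Phi_3)\det(P_0^{N+1})\prod_{k=0}^N \det\Gamma_k^{-1}$. With $\Gamma_k = 1$ the last product is $1$. For $\det(P_0^{N+1})$, note $P_0^{N+1} = \prod_{r=0}^{N}(1+A_r)$; with $A_0 = 0$ the $r=0$ factor is the identity, and for $1 \le r \le N$ we have $1+A_r = -R_r^t$ — here I should double-check the index range, since~\eqref{eq:jacobi_transform} is only stated for $1 \le k \le N-1$, but the matrices $S_N$, and the relation $R_k = -(1+A_k)^t$ together with the role of $A_N$, should still give $1 + A_N = -R_N^t$ after accounting for the boundary block (this range-bookkeeping at $k = N$ is the one spot needing care). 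Granting that, $\det(P_0^{N+1}) = \prod_{r=1}^N \det(-R_r^t) = (-1)^{nN}\prod_{r=1}^N \det R_r$, so $\det(\mathcal{J}) = 2^{-nN}(-1)^{nN}\det(\Phi_3)\prod_{k=1}^N \det R_k$. Finally, since $\mathcal{J}$ as a quadratic form lives on $\mathbb{R}^{nN}$ (the $N$ free blocks $x_1, \dots, x_N$) and $\mathcal{I} = 2\mathcal{J}$, we get $\det(\mathcal{I}) = 2^{nN}\det(\mathcal{J}) = (-1)^{nN}\det(\Phi_3)\prod_{k=1}^N \det R_k$, as claimed.

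The main obstacle I anticipate is not conceptual but bookkeeping: making sure the boundary blocks $S_1$, $S_N$ and the truncation $x_0 = x_{N+1} = 0$ are handled consistently with the index conventions in~\eqref{eq:jacobi_transform} (stated only for $1 \le k \le N-1$) and in the definitions of $P_i^j$, $M_k$, $\Phi_k$ — in particular confirming that the product $\prod_{i=0}^{k-1} M_i$ indexing matches the Jacobi block indexing, that $1+A_N = -R_N^t$ holds at the right endpoint, and that the factor $2^{-nN}$ versus $2^{nN}$ and the sign $(-1)^{nN}$ combine correctly. Everything else is a mechanical substitution into the already-proved Theorems~\ref{thm:index} and~\ref{thm:det}.
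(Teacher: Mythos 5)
Your proposal is correct and is essentially the paper's own (implicit) argument: the corollary is the direct specialization of Theorems~\ref{thm:index} and~\ref{thm:det} to the realization with $A_0=0$, $\Gamma_k=1$, using $1+A_k=-R_k^t$, $Q_k=1-S_k+R_kR_k^t$ and $\det(2\mathcal J)=2^{nN}\det\mathcal J$, exactly as you carry out. The one point you flag at $k=N$ is resolved by observing that $R_N$ is not a block of $\mathcal I$ but part of the chosen LQR realization: one may pick any invertible $R_N$ and set $1+A_N=-R_N^t$, $Q_N=1-S_N+R_NR_N^t$, and since $\Phi^3_{N+1}=R_N^{-1}\bigl(-\Phi^1_N+(1-S_N)\Phi^3_N\bigr)$ the product $\det(\Phi_3)\prod_{k=1}^N\det R_k$ is independent of that choice (this is the convention used in the paper's Toeplitz example).
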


\begin{remark}
	Clearly, applying formula \eqref{eq:index_formula} with $\tilde Q_k = Q_k+\lambda^*$ instead of $Q_k$ (or equivalently $\tilde S_k = S_k-\lambda^*$ instead of $S_k$), gives a formula to compute the  number of eigenvalues of $\mathcal{J}$ (respectively of $\mathcal{I}$) smaller than $\lambda^* \in \mathbb{R}$. For some oscillation results in this setting, see  \cite{Ammann2008RelativeOT}.
\end{remark}

The techniques used in our proofs are deeply connected to symplectic geometry. One can extend them to much more general settings \cite{beschastnyi_morse,morsegraph,hillBaranzini}. We have chosen to keep the exposition as simple as possible and minimize the use of symplectic language in this work. In this way, both formulas~\eqref{eq:index_formula} and~\eqref{eq:det_formula} can be applied directly.

The article has the following structure. In Section~\ref{section:lagrange multiplier} we derive the Hamiltonian system for the extremal curves using the end-point map and Lagrange multiplier rule. In Section~\ref{sec:index} we prove Theorem~\ref{thm:index} and in Section~\ref{sec:determinant}  Theorem~\ref{thm:det}. Finally, in Section~\ref{sec:applications}, we consider some examples and prove the generalized Euler identity~\eqref{eq:euler_agrachev}.

  \section{Eigenvalues of Jacobi matrices and the Lagrange multiplier rule}
  \label{section:lagrange multiplier}

In order to compute the index and the determinant of a Jacobi matrix $ {\mathcal{I}}$ in \eqref{eq: jacobi_matrix_blocks} we will use the flow of a certain system of difference equations. First, though, we need some notions from optimal control theory. 

The first object we introduce is the \textit{endpoint map}: 
$$
E^{k+1}: u\mapsto x_{k+1}(u), \qquad 0\leq k \leq N,
$$ 
This map takes a control $u$ and gives the solution of the differential equation ~\eqref{eq: difference eq} with $x_0 = 0$ at step ${k+1}$. We can write this map explicitly by iterating~\eqref{eq: difference eq}. 
    \begin{equation}
    	\label{eq:endpoint map}
    	E^{k+1}(u) = x_{k+1}(u_0,\dots,u_k)=  \sum_{j=1}^{k+1} P_j^{k+1} B_{j-1}u_{j-1}.
    \end{equation}
    In particular, from this formula, it is clear that the value of $E^{k+1}(u)$ depends only on  the first $k$ components of the control $u$. It makes sense, thus, to introduce the following filtration (i.e. \emph{flag} of subspaces) in the space of controls. Set $U : = \mathbb{R}^{n  N}$ the space of controls and define:
    \begin{equation}
    \label{def:Uk}
    	U_k := \{(u_0,u_1, \dots, u_k, 0,\dots,0) : u_j \in \mathbb{R}^n\}.
    \end{equation}
    Denote by $pr_l$ the orthogonal projection on $U_l$ with respect to the standard Euclidean scalar product. We have:
    \begin{equation*}
    	E_{x_0}^{k+1}(u) = E_{x_0}^{k+1}(pr_l u) \quad \forall \, l\ge k.
    \end{equation*}

Under \Cref{assumption:invertibility}, we can resolve the constraints $u\in \ker E^{k+1} \cap U_k$ quite easily. This will play an important role in all of the proofs. Indeed, consider the map 
$$
F_k:U_{k-1}\to \mathbb R^n,
$$
defined as
\begin{equation}
\label{eq:F_k_def}
F_k (u) := -B_{k}^{-1}\sum_{j=1}^{k}P^{k+1}_{j}B_{j-1} u_{j-1}.
\end{equation}
It is straightforward to check, using \cref{eq:endpoint map}, that:
\begin{equation}
\label{eq:resolution_using_F}
(u_0, \dots,u_k) \in \ker E^{k+1}\cap U_k \iff u_k = F_k(u).
\end{equation}
We can identify $F_k$ with a row block-matrix of size $n \times nk$. 

\bigskip

As we have seen in the previous section, a Jacobi matrix can be interpreted as the functional $\mathcal J$ in \cref{eq: functional} restricted to $\ker E^{N+1}$. We would like to find the eigenvalues of this restriction, i.e. of the corresponding Jacobi matrix ${\mathcal{I}}$. 

By definition $\tau\in \mathbb R$ is an eigenvalue of $\mathcal{J}$ if and only if the bilinear form $\mathcal J - \tau|u|^2$ is degenerate on $\ker E^{N+1}$. Since the form $\mathcal J(u) - \tau |u|^2$ is quadratic and $\ker E^{N+1}$ is a linear space, this is equivalent to finding critical points of $\mathcal J(u) - \tau |u|^2$ on $\ker E^{N+1}$. This is a typical task in constrained optimization and can be done via Lagrange multiplier rule. Notice that $\tau = \frac{1}{2}$ is not an eigenvalue of $\mathcal{J}$ if and only if all $Q_i$, $1\leq i \leq N$ are non degenerate.

Let us multiply $\mathcal{J}-\tau$ by $(1-2\tau)^{-1}$, operation that clearly preserves the kernel. After we introduce a new parameter $s = 1/(1-2\tau)$ we obtain a family of functionals
\begin{align*}
\mathcal J_s(u) :&= s\mathcal J(u) - \frac{s-1}{2}|u|^2 \\
&= \frac{1}{2}\left (\sum_{i=0}^N \vert u_i\vert ^2- s\sum_{i=1}^{N} \langle x_i,Q_i x_i\rangle \right).
\end{align*}

The following result is a discrete version of the Pontryagin maximum principle for LQR problems, which characterizes critical points of the constrained variational problem as solutions to a Hamiltonian system.

\begin{lemma}
	\label{lemma: jacobi equation}
The quadratic form $\mathcal J_s$ is degenerate on $\ker E^{N+1}$ if and only if the boundary value problem
\begin{equation}
\label{eq:jacobi_equation}
\begin{cases}
x_{k+1} = P^{k+1}_k x_k +B_k B_k^t \lambda_{k+1},\\
\lambda_{k} = (P^{k+1}_k)^t\lambda_{k+1} +s Q_k x_k,
\end{cases}
\end{equation}
with $x_0 = 0$, $x_{N+1} = 0$ has a non-trivial solution. Moreover the dimension of the space of solutions of \cref{eq:jacobi_equation} and of $\ker \mathcal J_s|_{\ker E^{N+1}}$ coincide. 
 
The former quadratic form is degenerate if and only if $(1-s)/s$ is an eigenvalue of the restriction to $\ker dE^{N+1}$ of:
 \begin{equation*}
\sum_{i=0}^N \vert v_i\vert^2 -  \sum_{j=1}^{N+1}( E^i(v))^t Q_i E^i(v).
 \end{equation*}
\end{lemma}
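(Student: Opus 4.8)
The plan is to derive the Hamiltonian system in \eqref{eq:jacobi_equation} as the Euler--Lagrange / Lagrange multiplier conditions for the constrained critical point problem, exactly as one does with the Pontryagin maximum principle. Recall that $\mathcal J_s$ is degenerate on $\ker E^{N+1}$ if and only if there exists a nonzero $u \in \ker E^{N+1}$ which is a critical point of $\mathcal J_s|_{\ker E^{N+1}}$, i.e. $d(\mathcal J_s)_u$ annihilates $\ker dE^{N+1} = \ker E^{N+1}$ (the map is linear). By the Lagrange multiplier rule there is a covector $\lambda = (\lambda_k)$ such that $d(\mathcal J_s)_u = \lambda \cdot dE^{N+1}$ in an appropriate sense; I would organize this as: introduce one multiplier $\lambda_{k+1} \in (\mathbb R^n)^*$ for each constraint $x_{k+1} - x_k - A_k x_k - B_k u_k = 0$ in \eqref{eq: difference eq}, form the Lagrangian $L = \mathcal J_s(u) - \sum_{k=0}^{N}\langle \lambda_{k+1}, x_{k+1} - (1+A_k)x_k - B_k u_k\rangle$ where now $x$ and $u$ are treated as independent variables, and set all partial derivatives to zero.

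Next I would carry out the differentiation step by step. Differentiating $L$ with respect to $u_k$ gives $u_k + B_k^t \lambda_{k+1} = 0$; substituting $u_k = -B_k^t \lambda_{k+1}$ back into \eqref{eq: difference eq} and using $x_{k+1} = (1+A_k)x_k + B_k u_k = P_k^{k+1} x_k - B_k B_k^t \lambda_{k+1}$. A sign convention will have to be chased carefully here, since the statement has $+B_k B_k^t\lambda_{k+1}$; I would absorb it by taking the multiplier with the opposite sign, so let me instead write the constraint term with a plus and get $x_{k+1} = P_k^{k+1}x_k + B_k B_k^t \lambda_{k+1}$, matching the first line of \eqref{eq:jacobi_equation}. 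Differentiating $L$ with respect to the internal states $x_k$ ($1 \le k \le N$) gives the adjoint (costate) recursion: the term $-s\langle Q_k x_k, x_k\rangle$ contributes $-sQ_k x_k$, the constraint at step $k-1$ contributes $+\lambda_k$, and the constraint at step $k$ contributes $-(1+A_k)^t\lambda_{k+1} = -(P_k^{k+1})^t\lambda_{k+1}$; setting the sum to zero yields $\lambda_k = (P_k^{k+1})^t\lambda_{k+1} + sQ_k x_k$, which is the second line of \eqref{eq:jacobi_equation}. The boundary conditions $x_0 = 0$ and $x_{N+1} = 0$ come, respectively, from the a priori constraint in the definition of $\ker E^{N+1}$ and from the fact that $x_{N+1}(u) = E^{N+1}(u) = 0$ is itself the constraint defining the subspace (there is no free endpoint, so no transversality condition is needed). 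The correspondence is a bijection because, given a solution $(x,\lambda)$, the controls $u_k = -B_k^t\lambda_{k+1}$ recover an element of $\ker E^{N+1}$ in the kernel of the second variation, and conversely; hence the dimensions of the two solution spaces agree.

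For the last assertion I would simply unwind the substitution $u \in \ker E^{N+1}$ into $\mathcal J_s$. Using $x_i = E^i(u)$ for a control $u$ with $E^{N+1}(u) = 0$, the functional becomes $\mathcal J_s(u) = \tfrac12\big(\sum_{i=0}^N |u_i|^2 - s\sum_{i=1}^{N}\langle Q_i E^i(u), E^i(u)\rangle\big)$. Writing $G(u) := \sum_{i=0}^N|u_i|^2 - \sum_{i=1}^{N+1}(E^i(u))^t Q_i E^i(u)$ for the form appearing in the statement, one has $2\mathcal J_s = (1-s)\sum|u_i|^2 + sG$ on $\ker E^{N+1}$ (the $i = N+1$ term of $G$ vanishes there since $E^{N+1}u = 0$, so including it is harmless). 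Thus $\mathcal J_s$ is degenerate on $\ker E^{N+1}$ iff $\ker dE^{N+1} = \ker E^{N+1}$ contains a nonzero $v$ with $sG(v) = -(1-s)|v|^2$, i.e. iff $G$ restricted to $\ker dE^{N+1}$ has eigenvalue $-(1-s)/s = (s-1)/s$; the sign/ordering convention in the statement is $(1-s)/s$, which I would reconcile by tracking whether the quoted form is $G$ or $-G$ — a routine check. The main obstacle throughout is bookkeeping: keeping the sign conventions for the multipliers, the direction of the recursions, and the placement of the shift $1+A_k$ versus $P_k^{k+1}$ all mutually consistent so that the final system reads exactly as \eqref{eq:jacobi_equation}; the conceptual content (Lagrange multipliers + substitution) is standard.
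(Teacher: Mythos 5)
Your derivation is correct and arrives at the same Hamiltonian system, but it is organized differently from the paper. You treat the states $x_k$ as independent unknowns, attach one multiplier $\lambda_{k+1}$ to each dynamic constraint $x_{k+1}-(1+A_k)x_k-B_ku_k=0$, and read off stationarity of the full Lagrangian (the textbook adjoint-equation derivation); this is legitimate here because the constraints are linear and surjective, so the multiplier rule applies in normal form and the sign of the multiplier can be chosen as you did. The paper never leaves control space: it invokes the multiplier rule only for the single constraint $E^{N+1}(u)=0$, producing one covector $\lambda_{N+1}$, and then constructs the intermediate costates by imposing $d_u\mathcal J^k_s=\lambda_{k+1}^t d_uE^{k+1}$ for the truncated functionals $\mathcal J^k_s$, extracting the control law $u_k=B_k^t\lambda_{k+1}$ from variations supported at step $k$ and the recursion for $\lambda_k$ from the surjectivity of $E^k$ (which is where \Cref{assumption:invertibility} enters). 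Your route is shorter and more familiar; the paper's has the advantage of staying inside the endpoint-map formalism ($U_k$, $\ker E^{k+1}$) that is reused verbatim in the index and determinant proofs.

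Two points to tighten. For the dimension statement, say explicitly why the correspondence is injective: from $u_k=B_k^t\lambda_{k+1}$ and $B_k$ invertible you recover $\lambda_1,\dots,\lambda_{N+1}$ from $u$, the trajectory $x$ by forward iteration from $x_0=0$, and $\lambda_0$ from the costate recursion; this is exactly the paper's remark that $\lambda_0\mapsto u$ is a linear bijection between BVP solutions and $\ker\mathcal J_s|_{\ker E^{N+1}}$. For the last assertion, degeneracy is a statement about the restricted bilinear form, not the scalar identity $sG(v)=-(1-s)|v|^2$ at a single vector, so phrase it as $s\hat G v+(1-s)v=0$ for the operator $\hat G$ of $G$ on $\ker E^{N+1}$; your conclusion that the quoted form has eigenvalue $(s-1)/s$ is the correct computation, and the value $(1-s)/s$ in the lemma corresponds to the opposite overall sign of that form — a discrepancy worth flagging, as you did, especially since the paper's own proof does not address this final claim at all.
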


\begin{proof}
If $u$ is critical point of $\mathcal{J}_s$ restricted to $\ker E^{N+1}$, there exists $\lambda_{N+1}\in \mathbb R^n$ such that

\begin{equation}
\label{eq:lagragrange_rule}
	d_u\mathcal{J}^N = \lambda_{N+1}^t d_u E^{N+1}.
\end{equation}
Since $E^{N+1}$ is linear, we have $d_{u} E^{N+1} = E^{N+1}$. We will often use the latter notation in the formulas that follow.

In order to obtain a discrete analogue of a Hamiltonian system of PMP, we need to introduce for $1\leq k \leq N$ a family of functionals
$$
\mathcal J_s^k (u) := \frac{1}{2}\left (\sum_{i=0}^k \vert u_i\vert ^2- s\sum_{i=1}^{k} \langle x_i,Q_i x_i\rangle \right)
$$
and their differentials:
\begin{equation}
		d_u\mathcal{J}^k_s(v) = \sum_{i=0}^k  \langle u_i,v_i \rangle -  s\sum_{i=1}^{k}  x_i^t Q_i E^i(v).\label{eq:functional_differ}
\end{equation}
Functionals $\mathcal{J}^k_s$ differ from $\mathcal{J}_s$ only by the range of summation and clearly $\mathcal{J}^N_s=\mathcal{J}_s$. We look for $\lambda_k \in \mathbb{R}^n$, $1\leq k \leq N+1$ which satisfy:
\begin{equation}
\label{eq:restricted_lag_rule}
	d_{u}\mathcal{J}^k_s = \lambda_{k+1}^t d_uE^{k+1},
\end{equation}

Using the explicit formula~\eqref{eq:endpoint map} for the end-point map we find the recurrence relation 
\begin{equation}
\label{eq:end_p_map_differ}
	E^{k+1}(v) = P^{k+1}_k E^{k}(pr_{k-1}v) + B_k v_k
\end{equation}
for all $v\in U_k$.
From here, we can already derive the expression for the $k-th$ component of the control. Indeed, assume that $v_i=0$ for all $i< k$. In this case 
$$
E^{i}(pr_{i-1} v) = 0, \quad \forall i\le k.
$$
Hence, if we substite such a control $v$ in~\eqref{eq:restricted_lag_rule}, then using~\eqref{eq:functional_differ} and~\eqref{eq:end_p_map_differ} we obtain the control law
\begin{equation}
\label{eq:opt control}
u_k = B_k^t \lambda_{k+1},
\end{equation}
which we can plug in~\eqref{eq: difference eq}.

The next step is to obtain a discrete equation for $\lambda_k$. To do this we compare the multipliers $\lambda_k$ and $\lambda_{k+1}$ by subtracting~\eqref{eq:restricted_lag_rule} from the same formula with $k$ replaced by $k-1$. Using $v\in U_{k-1}\subset U_k$ as a test variation and formulas~\eqref{eq:functional_differ} and~\eqref{eq:end_p_map_differ} we find that:
\begin{align*}
&\lambda_{k+1}^t E^{k+1}(v) - \lambda_{k}^t E^{k}(v) = d_u\mathcal{J}^{k}_s(v)- d_u\mathcal{J}^{k-1}_s(v)  \\
 &\iff \left(\lambda^t_{k+1}P^{k+1}_k -\lambda^t_k + s x_k^t Q_k \right) E^{k}(v) = 0. 
\end{align*}
Since by our assumption $B_k$ are invertible for all $0\leq k \leq N$, the end-point map $E^{k}$ is surjective, as can be seen from the explicit expression~\eqref{eq:endpoint map}. Thus the term in the bracket must vanish. 

Collecting everything gives \cref{eq:jacobi_equation}. The boundary conditions for this system come form the fact that we have to add the equation $E^{N+1}(u) =0 = x_{N+1}$ and that we are assuming $x_0 =0$.  

Now, notice that the initial covector $\lambda_0$ of the lift uniquely determines the whole trajectory and the control. This means that the correspondence $\lambda_0 \mapsto u$ is a bijection between the space of solutions of the boundary value problem and the kernel of $\mathcal{J}_s$.
\end{proof}

Under \Cref{assumption:invertibility}, we can rewrite system~\eqref{eq:jacobi_equation} as a forward equation. To do so, recall that $\Gamma_k = B_k B_k^t $, multiply the second equation by $(P^k_{k+1})^t$ and plug in the new expression for $\lambda_{k+1}$ into the first equation. This gives
\begin{equation}
	\label{eq: systemPMP}
\begin{cases}
x_{k+1} = (P^{k+1}_k -s \Gamma_k (P^k_{k+1})^t Q_k)x_k +\Gamma_k(P^k_{k+1})^t \lambda_{k},\\
\lambda_{k+1} = (P^k_{k+1})^t\lambda_{k}-s (P^k_{k+1})^t Q_k x_k.
\end{cases}
\end{equation}
We thus have
$$
\begin{pmatrix}
\lambda_{k+1}\\
x_{k+1}
\end{pmatrix} = M_k(s) 
\begin{pmatrix}
\lambda_{k}\\
x_{k}
\end{pmatrix},
$$
where
\begin{align*}
M_k(s) = 
	\begin{pmatrix}
	(P^k_{k+1})^t  &0\\
	\Gamma_k(P^k_{k+1})^t & P^{k+1}_k
\end{pmatrix} \begin{pmatrix}
	1 & -sQ_k \\ 0 &1
\end{pmatrix}.
\end{align*}
Both matrices in the product are symplectic, which makes $M_k(s)$ symplectic as well.

Define the flow up to the point $k$ as
$$
\Phi_{k}(s) = \begin{pmatrix}
		\Phi^1_k(s) & \Phi^2_k(s) \\\Phi^3_k(s) & \Phi^4_k(s)
	\end{pmatrix} := \prod_{i=0}^{k-1} M_{i}(s).
$$
When $k=N+1$, we just write $\Phi(s) = \Phi_{N+1}(s)$. Also in accordance with the notations given in the introduction, we write for $s=1$:
$$
\mathcal J^k := \mathcal J^k_1, \qquad \Phi_k := \Phi_k(1), \qquad M_k:= M_k(1). 
$$

\begin{remark}
We can reformulate Lemma~\ref{lemma: jacobi equation} and the boundary value problem~\eqref{eq:jacobi_equation} in terms of $\Phi_k(s)$. We are looking for solutions of~\eqref{eq:jacobi_equation} with boundary values $x_0 = x_{k+1} =0$. Writing as above $\Phi_{k+1}(s)$ as a block matrix shows that the existence of a solution is equivalent to the vanishing of the determinant of the block $\Phi^3_{k+1}$:
$$
\ker \mathcal J^k_s|_{\ker E^{k+1}} \neq 0 \iff \det \Phi^3_{k+1}(s) = 0.
$$
\end{remark}

%{\color{red}Non mi ricordo dove sta}

%System~\eqref{eq: difference eq} provides an isomorphism between the space of controls $u \in \mathbb{R}^{n(N+1)}$ and the space $x \in \mathbb{R}^{n(N+2)}$ with $x_0 =0$. 
%We can pass from controls variables to state variables via the following change of coordinates:
%\begin{equation}
%	\label{eq:change_trajectory_to_control}
%	u_k = B_k^{-1}(x_{k+1}-P_k^{k+1}x_k)
%\end{equation} 
%for $k = 0,\dots, N$.

\section{A recursive formula for the index}
\label{sec:index}

In this section we prove Theorem~\ref{thm:index}. The tools we will employ are essentially taken from linear algebra.  The same kind of ideas has been applied in \cite{morsegraph} to reformulate problems of intersection theory for paths in the Lagrange Grassmannian in terms of finite dimensional linear algebra. The classical approach to second order minimality conditions rephrases the problem of counting the number of negative eigenvalues of the Second Variation (the counter part of our quadratic form $\mathcal{J}$) as counting the intersection number of a curve in a finite-dimensional manifolds, known as the Lagrangian Grassmannian, with an appropriate submanifold. For further references on the continuum approach  see for instance \cite{DuistermaatIntersectionIndex,LongIndex,agrachev_quadratic_paper} and reference therein. 

In contrast to the continuous case, here we work only with an ordered sets of points constructed from the solutions of \cref{eq:jacobi_equation}. Still, all the information about the index can be recovered from them.

We will use the following lemma:
\begin{lemma}
	\label{lem:orthgonal_index}
	Suppose that a quadratic form $\mathcal Q$ is defined on finite dimensional vector space $X$ and a subspace $V \subseteq X$ is given. Denote by $V^{\perp_{\mathcal Q}}$ the $\mathcal Q-$orthogonal subspace to $V$ i.e.:
	\begin{equation*}
		V^{\perp_{\mathcal Q}}:= \{u \in X : \mathcal Q(u,v) =0, \forall v \in V\}.
	\end{equation*}  
Denote by $ind^- Q$ the number of negative eigenvalues of $Q$, then:
\begin{align*}
	ind^-\mathcal Q &= ind^-\mathcal Q\vert_V+ind^-\mathcal Q\vert_{V^{\perp_{\mathcal Q}}}\\&+\dim\big((V\cap V^{\perp_{\mathcal{Q}}})/(V\cap V^{\perp_{\mathcal Q}}\cap \ker \mathcal Q)\big).
\end{align*}
\end{lemma}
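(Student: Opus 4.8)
The plan is to decompose $X$ into pieces adapted to $V$ and its $\mathcal Q$-orthogonal complement $V^{\perp_{\mathcal Q}}$, then compute the index on each piece and account for the overlap. First I would handle the subtlety that $\mathcal Q$ need not be non-degenerate on $X$. Write $X_0 := \ker \mathcal Q$ for the radical of the whole form; since $X_0$ contributes nothing to $ind^-$ on any subspace containing it, and $X_0 \subseteq V^{\perp_{\mathcal Q}}$ always, the content of the statement is really about $X/X_0$ with the induced non-degenerate form. So I would first reduce to the case $\ker \mathcal Q = 0$, checking that both sides of the claimed identity are insensitive to quotienting by $X_0$: the left side because $ind^-\mathcal Q = ind^-(\mathcal Q \text{ on } X/X_0)$; the $ind^-\mathcal Q|_V$ term because $ind^- \mathcal Q|_V = ind^-(\mathcal Q|_V \text{ on } V/(V\cap X_0))$ and $V\cap X_0 = V \cap \ker\mathcal Q \subseteq V\cap V^{\perp_{\mathcal Q}} \cap \ker\mathcal Q$; similarly for the $V^{\perp_{\mathcal Q}}$ term; and the dimension term is manifestly the dimension of the image of $V\cap V^{\perp_{\mathcal Q}}$ in $X/X_0$.

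Once $\mathcal Q$ is non-degenerate on $X$, I would set $W := V\cap V^{\perp_{\mathcal Q}}$, which is exactly the radical of $\mathcal Q|_V$ (and also of $\mathcal Q|_{V^{\perp_{\mathcal Q}}}$). Since $\mathcal Q$ is non-degenerate, $\dim V + \dim V^{\perp_{\mathcal Q}} = \dim X + \dim W$, so $V + V^{\perp_{\mathcal Q}}$ has codimension $\dim W$ — wait, more precisely $\dim(V + V^{\perp_{\mathcal Q}}) = \dim V + \dim V^{\perp_{\mathcal Q}} - \dim W = \dim X$, so $V + V^{\perp_{\mathcal Q}} = X$. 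The key structural step: choose a complement $V = V' \oplus W$ and $V^{\perp_{\mathcal Q}} = V'' \oplus W$. Then I claim $\mathcal Q$ restricted to $V' \oplus V''$ is non-degenerate and $V'$ is $\mathcal Q$-orthogonal to $V''$ (the latter because $V' \subseteq V$ and $V'' \subseteq V^{\perp_{\mathcal Q}}$). It follows that $ind^-\mathcal Q|_{V'\oplus V''} = ind^-\mathcal Q|_{V'} + ind^-\mathcal Q|_{V''} = ind^-\mathcal Q|_V + ind^-\mathcal Q|_{V^{\perp_{\mathcal Q}}}$, using that $V', V''$ carry the non-degenerate parts of $\mathcal Q|_V, \mathcal Q|_{V^{\perp_{\mathcal Q}}}$.

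The remaining task is to relate $ind^-\mathcal Q|_{V'\oplus V''}$ to $ind^-\mathcal Q$ on all of $X$. Here I would observe $X = V + V^{\perp_{\mathcal Q}} = (V'\oplus V'') \oplus W'$ for a suitable complement $W'$ of dimension $\dim W$; and the subspace $V'\oplus V''$ is precisely the $\mathcal Q$-orthogonal complement of $W$ inside $X$ (since $u \perp_{\mathcal Q} W$ with $W\subseteq V, V^{\perp_{\mathcal Q}}$ — this needs a short verification using non-degeneracy and dimension count). Then the conclusion follows from the standard fact: if $\mathcal Q$ is non-degenerate on $X$ and $W\subseteq X$ is a totally isotropic subspace, then $ind^-\mathcal Q = ind^-(\mathcal Q|_{W^{\perp_{\mathcal Q}}}) + \dim W$, because one can split $X = W^{\perp_{\mathcal Q}} \oplus W'$ where $W' $ pairs non-degenerately with $W$ in hyperbolic fashion contributing exactly $\dim W$ to each of $ind^-$ and $ind^+$. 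Putting $W^{\perp_{\mathcal Q}} = V'\oplus V''$ plus the radical considerations gives the formula, with the dimension term $\dim\big((V\cap V^{\perp_{\mathcal Q}})/(V\cap V^{\perp_{\mathcal Q}}\cap\ker\mathcal Q)\big) = \dim W$ in the reduced setting. I expect the main obstacle to be the bookkeeping around degeneracy: carefully confirming that $V\cap V^{\perp_{\mathcal Q}}$ is the radical of $\mathcal Q|_V$, that $V'\oplus V''$ really is $W^{\perp_{\mathcal Q}}$, and that all the index identities survive the quotient by $\ker\mathcal Q$ cleanly — none of it is deep, but it is easy to drop a term.
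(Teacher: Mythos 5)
The paper itself states this lemma without proof, so there is nothing of theirs to compare against; judged on its own, your route (quotient by $\ker\mathcal Q$, split off the radical $W=V\cap V^{\perp_{\mathcal Q}}$ of $\mathcal Q|_V$, then invoke the hyperbolic-pair fact for an isotropic subspace) is the standard one and does lead to the formula. However, two of your structural claims are false as written, exactly at the spot you flag as needing ``a short verification using non-degeneracy and dimension count''. For non-degenerate $\mathcal Q$ one has $\dim V+\dim V^{\perp_{\mathcal Q}}=\dim X$ (not $\dim X+\dim W$), hence $\dim\bigl(V+V^{\perp_{\mathcal Q}}\bigr)=\dim X-\dim W$: the sum $V+V^{\perp_{\mathcal Q}}$ is \emph{not} all of $X$ whenever $W\neq 0$ (your first instinct, codimension $\dim W$, was right; the mid-sentence ``correction'' went the wrong way --- test it on the hyperbolic plane with $V$ an isotropic line, where $V^{\perp_{\mathcal Q}}=V$). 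Consequently $V'\oplus V''$, of dimension $\dim X-2\dim W$, cannot be $W^{\perp_{\mathcal Q}}$, which has dimension $\dim X-\dim W$; the dimension count you appeal to refutes that identification rather than proving it, and your decomposition $X=(V'\oplus V'')\oplus W'$ with $\dim W'=\dim W$ is likewise off (the complement has dimension $2\dim W$). The correct statements are $V+V^{\perp_{\mathcal Q}}=W^{\perp_{\mathcal Q}}=(V'\oplus V'')\oplus W$, with $W$ equal to the radical of $\mathcal Q|_{W^{\perp_{\mathcal Q}}}$.

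The good news is that the error is harmless for the index, so the proof is repairable without changing its architecture: since $W$ is the radical of $\mathcal Q|_{W^{\perp_{\mathcal Q}}}$ and $V'\oplus V''$ is a complement to it inside $W^{\perp_{\mathcal Q}}$, one still has $ind^-\mathcal Q|_{W^{\perp_{\mathcal Q}}}=ind^-\mathcal Q|_{V'\oplus V''}=ind^-\mathcal Q|_V+ind^-\mathcal Q|_{V^{\perp_{\mathcal Q}}}$, and your ``standard fact'' $ind^-\mathcal Q=ind^-\mathcal Q|_{W^{\perp_{\mathcal Q}}}+\dim W$ for isotropic $W$ and non-degenerate $\mathcal Q$ is correct; combining these gives the lemma in the reduced case. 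The reduction modulo $X_0=\ker\mathcal Q$ is essentially fine, but one point deserves a line: you need the image of $V\cap V^{\perp_{\mathcal Q}}$ in $X/X_0$ to equal the intersection of the images of $V$ and $V^{\perp_{\mathcal Q}}$, which holds because $X_0\subseteq V^{\perp_{\mathcal Q}}$ (a representative in $V$ of a class lying in both images stays in $V^{\perp_{\mathcal Q}}$ after absorbing the $X_0$-part), and likewise that the image of $V^{\perp_{\mathcal Q}}$ is exactly the $\bar{\mathcal Q}$-orthogonal of the image of $V$. With those corrections the argument is complete.
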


We will apply iteratively this formula to the subspaces $V_k := U_k \cap \ker E^{k+1}$. The subspaces $U_k$ were defined in~\eqref{def:Uk} and provide a filtration of our space of variations. One has a natural filtration $V_{k-1}\subset V_{k}$ as well. Indeed, if $u\in V_{k-1}$, then $x_{k}=E^k(u)=0$. Hence, if we extend the control $u$ to $V_k$ by taking $u_k = 0$, we get from~\eqref{eq:endpoint map}
$$
x_{k+1}=E^{k+1}(u) = P^{k+1}_k E^k(u) = 0.
$$ 
Notice also that for any $k$, the subspace $V_{k-1}$ has codimension $n$ in $V_{k}$. 

We denote for brevity 
$$
\mathcal Q_k:= \mathcal{J}^k|_{\ker E^{k+1}}, \quad V_k^{\perp}:=V_k^{\perp_{\mathcal Q_{k+1}}}, \quad \mathcal Q_k^\perp := \mathcal Q_k|_{V_{k-1}^\perp}.
$$
 Let us apply Lemma~\ref{lem:orthgonal_index} with $\mathcal{Q} = \mathcal{Q}_{k+1}$ and $V = V_{k}\subseteq V_{k+1} =X$. This gives:
\begin{align*}
		ind^-\mathcal Q_{k+1} &= ind^-\mathcal Q_{k}+ind^-\mathcal Q_{k+1}^\perp \\&+\dim\big((V_k\cap V_k^{\perp})/(V_k\cap V_k^{\perp}\cap \ker \mathcal Q_{k+1})\big).
\end{align*}

Thus, iteration of this formula gives:
\begin{align*}
	ind^- \mathcal{J}|_{\ker E^{N+1}} &= \sum_{k=0}^{N-1}ind^-\mathcal Q_{k+1}^\perp\\& +\dim\big((V_k\cap V_k^{\perp})/(V_k\cap V_k^{\perp}\cap \ker \mathcal  Q_{k+1})\big).
\end{align*}

What is left to do is to express every term using the discrete Hamiltonian system given in \cref{eq:jacobi_equation}.

First of all, let us describe the subspaces $V_k^\perp \cap V_k$ and $V_k^\perp \cap V_k\cap \ker \mathcal Q_{k+1}$. 
\begin{lemma}
 Let $V_k$  be defined as above and let $\Pi$ be the vertical subspace
$$
\Pi = \{(\lambda,x)\in \mathbb R^{2n}\,:\, x=0 \}.
$$
Then we have the following identifications:
\begin{equation*}
\begin{split}
   V_k^\perp \cap V_k &= \ker(\mathcal Q_{k}) = \Phi_{k+1}(\Pi)\cap \Pi;\\ 
    V_k^\perp \cap V_k \cap \ker(\mathcal Q_{k+1}) &= \Phi_{k+1}(\Pi)\cap M_{k+1}^{-1}(\Pi) \cap \Pi.
   \end{split}
 \end{equation*}
\end{lemma}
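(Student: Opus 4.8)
The plan is to unwind the three objects on both sides of each claimed identity and match them. Start with the middle identity $V_k^\perp \cap V_k = \ker \mathcal Q_k$. First I would record the concrete meaning of $V_k^\perp$: by definition $u \in V_k^\perp$ iff $u \in V_{k+1}$ and $\mathcal Q_{k+1}(u,v) = 0$ for all $v \in V_k$. Since $\mathcal Q_{k+1} = \mathcal J^{k+1}|_{\ker E^{k+2}}$ agrees with $\mathcal J^k$ on test vectors $v \in V_k$ (because $\mathcal J^{k+1}$ and $\mathcal J^k$ differ only by the term involving $x_{k+1}$, which vanishes on $V_k$), and because $V_k$ has codimension $n$ in $V_{k+1}$, the condition $\mathcal Q_{k+1}(u,\cdot)|_{V_k}=0$ says $u$ is $\mathcal J^k$-orthogonal to all of $V_k$ while lying in $V_{k+1}$. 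When I further intersect with $V_k$, I get exactly the radical of $\mathcal J^k|_{V_k}=\mathcal Q_k$, i.e. $\ker \mathcal Q_k$. This uses only elementary linear algebra of quadratic forms and the filtration facts already established above.

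Next, I would identify $\ker \mathcal Q_k$ with $\Phi_{k+1}(\Pi)\cap\Pi$. By the Remark following Lemma~\ref{lemma: jacobi equation} and the discussion there (specialized to $s=1$), a variation $u \in \ker E^{k+1}\cap U_k$ is in the radical of $\mathcal Q_k$ precisely when it lifts to a solution of the Hamiltonian system~\eqref{eq:jacobi_equation} on the time interval $0,\dots,k+1$ with $x_0 = 0$ and $x_{k+1} = 0$. A solution with $x_0 = 0$ is exactly a trajectory whose initial condition $(\lambda_0,0)$ lies in the vertical $\Pi$; its value at time $k+1$ is $\Phi_{k+1}(\lambda_0,0) \in \Phi_{k+1}(\Pi)$, and requiring $x_{k+1} = 0$ means this value lies in $\Pi$ as well. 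Conversely any point of $\Phi_{k+1}(\Pi)\cap\Pi$ comes from such a solution. The one thing to check carefully here is the bijection between solutions (equivalently, admissible $\lambda_0$) and radical vectors $u$, but this was already proven inside Lemma~\ref{lemma: jacobi equation} (the map $\lambda_0 \mapsto u$), so I can cite it; the only subtlety is making sure the dimension count for the intersection of subspaces matches the dimension of the radical, which follows since distinct $\lambda_0$ give distinct endpoints (the flow $\Phi_{k+1}$ is invertible).

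For the last identity, I intersect further with $\ker \mathcal Q_{k+1}$. A vector $u$ already in $\ker\mathcal Q_k$ corresponds to a solution on $0,\dots,k+1$ with endpoints in $\Pi$; extending it to $U_{k+1}$ by setting $u_{k+1}$ according to the control law~\eqref{eq:opt control} gives a solution on $0,\dots,k+2$, and $u \in \ker\mathcal Q_{k+1}$ iff additionally $x_{k+2} = 0$. Now $x_{k+1} = 0$ is already known, and the propagation from step $k+1$ to $k+2$ is governed by $M_{k+1}$, so $x_{k+2} = 0$ together with $x_{k+1}=0$ says the time-$(k+1)$ point $(\lambda_{k+1}, 0) = \Phi_{k+1}(\lambda_0,0)$ maps under $M_{k+1}$ into $\Pi$, i.e. lies in $M_{k+1}^{-1}(\Pi)$. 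Combining with the previous paragraph gives $\Phi_{k+1}(\Pi)\cap M_{k+1}^{-1}(\Pi)\cap\Pi$, as claimed. I expect the main obstacle to be purely bookkeeping: keeping straight which functional ($\mathcal J^k$ vs. $\mathcal J^{k+1}$), which constraint set ($\ker E^{k+1}$ vs. $\ker E^{k+2}$), and which time horizon is in play at each stage, and verifying that "extend the control by the optimal control law" does not change membership in the lower radical — i.e. that the lift is consistent across the two horizons. Once the dictionary between radical vectors, Hamiltonian solutions with vertical boundary data, and intersections of $\Phi$-images of $\Pi$ is set up cleanly, both identities drop out.
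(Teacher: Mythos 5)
Your proposal is correct and follows essentially the paper's own route: the first identity because $\mathcal Q_{k+1}$ restricts to $\mathcal Q_k$ on $V_k$, the second via \Cref{lemma: jacobi equation}'s identification of radical elements with solutions of~\eqref{eq:jacobi_equation} satisfying $x_0=x_{k+1}=0$, read through the flow $\Phi_{k+1}$, and the third by applying the same identification one step further and transporting it by $M_{k+1}$. The lift-consistency point you flag (zero-extension versus control-law extension across the two horizons) is likewise left implicit in the paper; it holds because the multiplier is uniquely recovered from the control via $\lambda_{j+1}=(B_j^t)^{-1}u_j$, and under \Cref{assumption:invertibility} both sides of the last identity are in fact zero, as the paper observes immediately after the lemma.
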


\begin{proof}
A direct consequence of the definitions of $V_k$ and $V_k^\perp$ is that $V_k^\perp \cap V_k = \ker(\mathcal Q_{k})$. \Cref{lemma: jacobi equation} identifies the elements of the kernel with solutions of the equation~\eqref{eq:jacobi_equation} with $x_0 = x_{k+1}=0$. In terms of the corresponding flow $\Phi_{k+1}$ we get:
$$
\ker(\mathcal Q_{k}) = \Phi_{k+1}(\Pi)\cap \Pi. 
$$

Similarly we have $\ker(\mathcal Q_{k+1}) = \Phi_{k+2}(\Pi)\cap \Pi$. Applying $M_{k+1}^{-1}$ gives 
$$
\ker(\mathcal Q_{k+1}) = \Phi_{k+1}(\Pi)\cap M_{k+1}^{-1}(\Pi).
$$
Combining this with the previous formula proves the second isomorphism in the statement.
\end{proof}

Notice that the previous lemma holds for general LQR. In this particular case, $M^{-1}_{k+1}(\Pi)$ is transversal to the fibre. Indeed, from the explicit expression of $M_{k+1}$ we have that $M_{k+1}(\Pi) \cap \Pi \neq 0$ if and only if $\ker \Gamma_{k+1} (P^{k+1}_{k+2})^t \neq 0$, which is never the case under \Cref{assumption:invertibility}. Thus
\begin{equation*}
\begin{split}
   V_k^\perp \cap V_k = \ker(\mathcal Q_{k}) &= \Phi_{k+1}(\Pi)\cap \Pi;\\ 
    V_k^\perp \cap V_k \cap \ker(\mathcal Q_{k+1}) &= 0.
   \end{split}
 \end{equation*} 

The next step is to compute $ind^- \mathcal Q_k^\perp$. To do so consider the standard symplectic form on $\mathbb R^{2n}$:
$$
\sigma((\lambda_1,x_1),(\lambda_2,x_2)) = \lambda_1^t x_2 - \lambda_2^t x_1=
\begin{pmatrix}
\lambda_1^t & x_1^t
\end{pmatrix}
\begin{pmatrix}
0 & I\\
-I & 0
\end{pmatrix}
\begin{pmatrix}
\lambda_2\\
x_2
\end{pmatrix}.
$$
A plane $\Lambda\subset \mathbb R^{2n}$ is called \textit{Lagrangian} if $\sigma|_{\Lambda} = 0$ and $\dim \Lambda = n$, i.e., it is a maximally isotropic space.

The \textit{Maslov form} $m(\Lambda_1,\Lambda_2,\Lambda_3)$ of a triple of Lagrangian spaces $\Lambda_1,\Lambda_2,\Lambda_3$ is a symmetric quadratic form on $(\Lambda_1 + \Lambda_3) \cap \Lambda_2$ defined as follows. Let $p_2 = p_1 + p_3$ where $p_i\in \Lambda_i$. Then
$$
m(\Lambda_1,\Lambda_2,\Lambda_3)(p_2) = \sigma(p_1,p_3). 
$$

\begin{lemma}
 The negative index of $\mathcal Q_{k}$ restricted to $V_{k-1}^\perp$ coincides with the negative index of $m(\Phi_{k}(\Pi),M_{k}^{-1}(\Pi),\Pi)$.
 \end{lemma}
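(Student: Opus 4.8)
The plan is to unwind the definition of the $\mathcal{Q}_{k+1}$-orthogonal complement $V_{k-1}^\perp$ and to identify $\mathcal{Q}_k$ restricted to it with the Maslov form of the triple $(\Phi_k(\Pi), M_k^{-1}(\Pi), \Pi)$. First I would set up the dictionary: by Lemma~\ref{lemma: jacobi equation} and the endpoint-map formalism, a variation $u \in U_{k-1}$ lies in $\ker E^k$ precisely when it is parametrized by an initial covector $\lambda_0$ through the flow, so that $(\lambda_k, x_k) = \Phi_k(\lambda_0, 0) \in \Phi_k(\Pi)$; the condition $u \in V_{k-1}$ means additionally $x_k = E^k(u) = 0$, i.e. $(\lambda_k,0) \in \Phi_k(\Pi) \cap \Pi$. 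The key computation is to express the bilinear form $\mathcal{Q}_{k+1}(u,v)$ for $u,v$ ranging over lifts of this type in terms of the symplectic form $\sigma$ on $\mathbb R^{2n}$ evaluated on the corresponding phase-space points — this is the discrete analogue of the classical formula expressing the second variation as a boundary symplectic pairing, and it should follow by a summation-by-parts (Abel summation) argument on $\sum_i \langle u_i, v_i\rangle - s\langle Q_i x_i, x_i\rangle$, using the control law $u_i = B_i^t\lambda_{i+1}$ and the two equations of the Hamiltonian system~\eqref{eq: systemPMP}.

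Next I would characterize $V_{k-1}^\perp = V_{k-1}^{\perp_{\mathcal{Q}_k}}$ inside $\ker E^{k+1} \cap U_k$. Writing a variation there as the pair $(\lambda_k,x_k) = \Phi_k(\lambda_0,0)$ together with the last control $u_k$ (equivalently $\lambda_{k+1}$) subject to $x_{k+1}=0$, the $\mathcal{Q}_k$-orthogonality to all of $V_{k-1}$ — which corresponds to $\Phi_k(\Pi)\cap \Pi$ — should translate, via the boundary symplectic pairing above, into the statement that the phase point at step $k$ lies in the sum $\Phi_k(\Pi) + \Pi$ (the $\sigma$-orthogonal complement of $\Phi_k(\Pi)\cap\Pi$ inside the Lagrangian directions). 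The condition $x_{k+1}=0$ says $(\lambda_{k+1},0)\in\Pi$, and pulling back by $M_k$ this is $(\lambda_k,x_k)\in M_k^{-1}(\Pi)$. So $V_{k-1}^\perp$ gets identified with $\bigl(\Phi_k(\Pi)+\Pi\bigr)\cap M_k^{-1}(\Pi)$, which is exactly the domain $(\Lambda_1+\Lambda_3)\cap\Lambda_2$ of the Maslov form $m(\Lambda_1,\Lambda_2,\Lambda_3)$ with $\Lambda_1 = \Phi_k(\Pi)$, $\Lambda_2 = M_k^{-1}(\Pi)$, $\Lambda_3 = \Pi$. (All three are Lagrangian: $\Pi$ is the vertical, and $\Phi_k$, $M_k$ are symplectic.) Finally, on this subspace, decomposing the phase point as $p_2 = p_1 + p_3$ with $p_1\in\Phi_k(\Pi)$, $p_3\in\Pi$, the quadratic form $\mathcal{Q}_k^\perp$ evaluated via the boundary pairing should reduce precisely to $\sigma(p_1,p_3) = m(\Lambda_1,\Lambda_2,\Lambda_3)(p_2)$, giving equality of the forms and hence of their negative indices.

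The main obstacle I anticipate is the bookkeeping in the summation-by-parts step: one must be careful about the index ranges (the functional $\mathcal{J}^k$ sums $|u_i|^2$ from $i=0$ to $k$ but the $Q_i$ term from $i=1$ to $k$), about the boundary terms at $i=0$ (where $x_0=0$ kills the contribution) and at $i=k$, and about correctly matching the roles of $x_k$ versus $\lambda_k$ in the two Lagrangian planes — the discrete Hamiltonian system~\eqref{eq:jacobi_equation} is "backward" in $\lambda$, so the identification $\ker\mathcal{Q}_{k+1} = \Phi_{k+2}(\Pi)\cap\Pi = \Phi_{k+1}(\Pi)\cap M_{k+1}^{-1}(\Pi)$ already used in the previous lemma has to be invoked with the correct shift. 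A secondary subtlety is verifying that the form obtained from the boundary pairing is genuinely the restriction of $\mathcal{Q}_k$ and not of $\mathcal{Q}_{k+1}$ — i.e. that the last summand $|u_k|^2 - s\langle Q_k x_k,x_k\rangle$ is accounted for correctly by the passage from $\Phi_{k+1}$ to $\Phi_k$ and the factor $M_k$. Once the correspondence between phase-space decompositions and variations is pinned down, the identification of the forms is essentially forced by non-degeneracy of $\sigma$ and surjectivity of the endpoint maps.
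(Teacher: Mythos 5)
Your overall route is the paper's: identify elements of $V_{k-1}^\perp$ with Hamiltonian-lifted data, rewrite $\mathcal Q_k$ on them as a boundary symplectic pairing (your Abel-summation step is exactly the computation used), and recognize the result as the Maslov form of the triple $(\Phi_k(\Pi),M_k^{-1}(\Pi),\Pi)$ via the decomposition $p_2=p_1+p_3$. The gap is in the step you call the ``dictionary''. It is not true that $u\in U_{k-1}$ lies in $\ker E^{k}$ precisely when it is generated by an initial covector $\lambda_0$ through the flow: $V_{k-1}=U_{k-1}\cap\ker E^{k}$ has dimension $n(k-1)$, while flow-generated controls form an $n$-dimensional family; correspondingly $V_{k-1}$ does not ``correspond to $\Phi_k(\Pi)\cap\Pi$'' --- by the preceding lemma it is $V_{k-1}\cap V_{k-1}^\perp=\ker\mathcal Q_{k-1}$ that equals $\Phi_k(\Pi)\cap\Pi$. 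Because of this conflation, the crucial assertion of your second paragraph --- that every element of $V_{k-1}^\perp$ is of the form ``flow trajectory $(\lambda_k,x_k)=\Phi_k(\lambda_0,0)$ for the first $k$ steps, plus a last control fixed by $x_{k+1}=0$'' --- is assumed rather than proved, and the condition you offer as the translation of $\mathcal Q_k$-orthogonality (``the phase point at step $k$ lies in $\Phi_k(\Pi)+\Pi$'') is vacuous once such a lift is presupposed, since $\Phi_k(\lambda_0,0)\in\Phi_k(\Pi)\subseteq\Phi_k(\Pi)+\Pi$ automatically. As written, the plan never actually uses orthogonality to $V_{k-1}$, and that orthogonality is the whole content of the lemma.

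The missing argument is the Lagrange-multiplier step (a finite-horizon rerun of Lemma~\ref{lemma: jacobi equation}, or the explicit $L^2$-orthogonality computation in the paper): $\mathcal Q_k(u,v)=0$ for all $v\in V_{k-1}=\ker(E^{k}|_{U_{k-1}})$ means the functional $v\mapsto d_u\mathcal J^{k}(v)$ vanishes on $\ker(E^{k}|_{U_{k-1}})$, hence equals $\lambda^t E^{k}(\cdot)$ for some $\lambda\in\mathbb R^n$; testing with variations supported at single steps then yields the control law $u_i=B_i^t\lambda_{i+1}$ for $i\le k-1$ and the adjoint equation, i.e.\ precisely that $(\lambda_i,x_i)$ is a flow trajectory up to step $k$, so $(\lambda_k,x_k)\in\Phi_k(\Pi)$, while $x_{k+1}=0$ gives $(\lambda_{k+1},0)\in\Pi$ and $M_k^{-1}(\lambda_{k+1},0)\in M_k^{-1}(\Pi)\cap(\Phi_k(\Pi)+\Pi)$. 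Note that your summation-by-parts computation does prove the converse inclusion (flow-generated implies $\mathcal Q_k$-orthogonal to $V_{k-1}$), so once the multiplier step is added (or replaced by a dimension count together with $V_{k-1}\cap\ker\mathcal Q_k=0$, which follows from the transversality $M_k^{-1}(\Pi)\cap\Pi=0$), the remainder of your plan --- the identification of the domain and the equality of the form with $\sigma(p_1,p_3)$, hence of the negative indices --- goes through and coincides with the paper's proof.
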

	\begin{proof}

An element $u$ belongs to $V_k$ if and only if it is of the form:
\begin{equation*}
	u = (u_0,\dots,u_{k-1}, F_k(u)), 
\end{equation*}
where $F_k$ is defined in~\eqref{eq:F_k_def}. Hence we can identify $V_k$ with $\mathbb R^{nk}$.

%It is somewhat simpler notationally to drop all the zero after the $(k+1)-$th entry and thus work on the image of $pr_{k+1}$. We will identify $V_{k+1}$ and $V_k$ with the following spaces:
%\begin{equation*}
%V_k = \{(u,F_k(u),0):u \in \mathbb{R}^{nk}\}, \quad  V_{k+1} = \{(u,u_k,F_{k+1}(u,u_k)):u \in \mathbb{R}^{nk}\}.
%\end{equation*} 
It follows that an element $u$ belongs to $V_{k-1}^\perp$ if and only if for all $v\in V_{k-1}\simeq \mathbb R^{n(k-1)}$:
\begin{equation*}
	\begin{aligned}
&0=\mathcal Q_{k}(u,v) = \frac{1}{2}\left(\sum_{i=0}^{k} \langle u_i,v_i \rangle - \sum_{i=1}^{k} \langle Q_{i}E^{i}u,E^{i}v\rangle\right) \\
&= \frac{1}{2} \Bigg(\sum_{i=0}^{k-2}\langle u_i,v_i \rangle - \langle (E^{i+1})^t Q_{i+1} E^{i+1}u, v\rangle_{L^2} +  \langle u_{k-1}, F_{k-1} (v)\rangle\Bigg)\\
 &= \frac{1}{2} \langle  u-\sum_{i=0}^{k-2}(E^{i+1})^tQ_{i+1} E^{i+1}u+ F_{k-1}^t(u_{k-1}),v\rangle_{L^2}
\end{aligned}
\end{equation*}
where the $L^2$-scalar product is defined on functions on a finite set with values in $\mathbb R^n$. Here we used that $E^k(v) =0$ since $v \in V_{k-1}$.

We have thus shown that if $u\in V_{k-1}^\perp$ then
\begin{equation*} \langle  u-\sum_{i=0}^{k-2}(E^{i+1})^tQ_{i+1} E^{i+1}u,v\rangle_{L^2}  = - \langle F_{k-1}^t(u_{k-1}),v\rangle_{L^2}, 
\end{equation*}
for all functions $v:\{0,\dots,k-1\}\to \mathbb R^n$. 
	
Let us now simplify the form $\mathcal Q_{k}^\perp$ using this expression. Take an element $u \in V_{k-1}^\perp$. Then

     \begin{equation*}
     	\begin{split}
     		&\mathcal Q_k(u)
     		= \frac{1}{2}\Bigg(\left(\sum_{i=0}^{k-2}  \langle u_i, u_i \rangle - \langle u, (E^{i+1})^t Q_{i+1} E^{i+1}(u)\rangle \right) \\& + \langle u_{k-1},u_{k-1}\rangle+ \langle F_{k}(u),F_{k}(u)\rangle-\langle E^{k}(u),Q_{k}E^{k}(u)\rangle \Bigg)\\
     		&= \frac{1}{2} (-\langle F_{k-1}(u),u_{k-1} \rangle + \langle u_{k-1},u_{k-1}\rangle  + \langle F_{k}(u),F_{k}(u)\rangle\\&-\langle x_{k},Q_{k}x_{k}\rangle ).
     	\end{split}
     \end{equation*}
 Where we used the characterization of $V_{k-1}^\perp$ given in the previous lemma and substituted $E^{k}(u)$ with $x_{k}$. We now rewrite each term in symplectic form. In particular recall the following relations, which are consequences of~\eqref{eq:opt control} and~\eqref{eq: difference eq}
 \begin{equation*}
 	\begin{aligned}
u_k &= B_k^t\lambda_{k+1}, \\
x_{k}  &= P_{k-1}^{k}x_{k-1}+ \Gamma_{k-1} \lambda_{k},\\
 B_{k-1}^t\lambda_{k} &= F_{k-1}(u) = -B_{k-1}^{-1} P_{k-1}^{k}x_{k-1}.
 	\end{aligned}
 \end{equation*}
 This implies that:
\begin{align*}
\langle u_{k-1},u_{k-1}\rangle & = \langle  \lambda_{k},\Gamma_{k-1}\lambda_{k}\rangle,\\
\langle F_{k-1}(u),u_{k-1}\rangle &= -\langle \lambda_{k},P_{k-1}^k x_{k-1}\rangle.
\end{align*}

Similarly, $F_k(u)$ and $x_{k}$ can be expressed in terms of $\lambda_{k+1}$.
\begin{equation*}
- P_k^{k+1}x_k = B_k F_k(u) = \Gamma_k \lambda_{k+1}.
\end{equation*} We thus get the following expression for $u\in V_{k-1}^\perp$ after substituting the previous relations and grouping separately terms with $\lambda_k$ and $\lambda_{k+1}$:
\begin{equation*}
	\begin{split}
	\mathcal Q_k(u) &= \frac{1}{2} ( \langle \lambda_{k},P_{k-1}^k x_{k-1}+\Gamma_{k-1} \lambda_k\rangle  \\& \quad + \langle \lambda_{k+1},\Gamma_{k}\left( \lambda_{k+1} +(P_{k+1}^k)^tQ_{k}x_{k} \right) \rangle )\\
&=\frac{1}{2} \left( \langle \lambda_k,x_k\rangle +\langle \lambda_{k+1},\Gamma_k\left( \lambda_{k+1} +(P_{k+1}^k)^tQ_{k}x_{k} \right) \rangle \right).
\end{split}
\end{equation*} 
On the other hand let us consider the Maslov form $m(\Phi_{k}(\Pi),M^{-1}_{k}(\Pi),\Pi)$ and write down its explicit expression. We have on $M^{-1}_{k}(\Pi) \cap (\Phi_{k}(\Pi) +\Pi )$.
\begin{align}
\label{eq:maslov_spaces}
	\nonumber M_{k}^{-1}\begin{pmatrix}
		\lambda_{k+1}\\0
	\end{pmatrix} &= \begin{pmatrix}
		\left((P_{k}^{k+1})^t-Q_{k}P_{k+1}^{k}\Gamma_{k}\right) \lambda_{k+1}\\
		-P_{k+1}^{k}\Gamma_{k}\lambda_{k+1}
\end{pmatrix} \\  &= \begin{pmatrix}
\lambda_{k}+\nu\\ x_{k}
\end{pmatrix} \in \Pi+\Phi_{k}(\Pi).
\end{align}
Thus the value of $m(\lambda_{k+1}) = \sigma ((\lambda_k,x_k),(\nu,0)) =  -\langle \nu,x_{k}\rangle$ is determined by:
\begin{equation}
	\label{eq:maslovFormLemma}
	\begin{split}
	m(\lambda_{k+1}) &= -\langle	\left((P_{k}^{k+1})^t-Q_{k}P_{k+1}^{k}\Gamma_{k}\right) \lambda_{k+1}-\lambda_{k},x_{k}\rangle \\	
	&=\langle  \lambda_{k+1},\Gamma_{k}(\lambda_{k+1}+(P_{k+1}^{k})^tQ_{k}x_{k})\rangle + \langle \lambda_{k},x_{k}\rangle\\
	&= 2\mathcal{Q}_k(u).
\end{split}
\end{equation}
Here, in the second equality we substituted $\nu$ from~\eqref{eq:maslov_spaces} and in the third equality $x_k$ from the same equation. It follows that negative inertia index of $\mathcal Q_k^\perp$ is the same as the negative inertia index of $m(\Phi_{k}(\Pi),M^{-1}_{k}(\Pi),\Pi)$.
	\end{proof}

To arrive at formula~\eqref{eq:index_formula} we need to write down explicitly the kernel of the Maslov form in terms of the initial covector $\lambda_0$. We have
\begin{equation*}
	\begin{pmatrix}
		\lambda_{k} \\ x_{k}
	\end{pmatrix} = \begin{pmatrix}
	\Phi_{k}^1 \lambda_0\\ \Phi_{k}^3\lambda_0
\end{pmatrix}, \quad \Phi_{k} = \begin{pmatrix}
\Phi_{k}^1 &\Phi_{k}^2 \\ \Phi_{k}^3 &\Phi_{k}^4
\end{pmatrix}.
\end{equation*}
Moreover, $\lambda_{k+1}$ is directly determined by $x_{k}$, see \cref{eq:maslov_spaces}, and thus by $\lambda_0$ as:\begin{equation*}
	\lambda_{k+1} = -\Gamma_{k}^{-1}P^{k+1}_k\Phi_{k}^3\lambda_0.
\end{equation*}

After inserting inside the explicit expression of the Maslov form given in \eqref{eq:maslovFormLemma} we find that the following quadratic form is the one to consider:
\begin{equation*}
\langle \lambda_0,\Big((\Phi_{k}^3)^t\big((P^{k+1}_{k})^{t}\Gamma_{k}^{-1}(P^{k+1}_{k})-Q_{k} \big)\Phi_{k}^3 + (\Phi_{k}^3)^t\Phi_{k}^1\Big)\lambda_0 \rangle.
\end{equation*}

Now each term in the formula for $ind^- \mathcal J|_{\ker E^{N+1}}$ is identified and collecting all the pieces together finishes the proof of Theorem~\ref{thm:index}.

\section{Proof of the determinant formula}
\label{sec:determinant}

In this section we prove Theorem~\ref{thm:det}. 
Consider the quadratic form $2\mathcal J$. As discussed in the introduction, we can pass from control variables $u$ to state variables $x$. This gives the Jacobi matrix $ {\mathcal{I}}$ once we impose the boundary conditions $x_0 = x_{N+1} = 0$.  This matrix decomposes as the sum of two terms. A diagonal matrix $Q$ and a tridiagonal one $T = (T_{i,j})_{i,j} $ given by:
\begin{equation*}
	\begin{aligned}
		{\mathcal I}_s = T - sQ, \quad 
		Q = \mathrm{diag}(Q_i), \, i = 1,\dots, N, \\ T_i = \Gamma_{i-1}^{-1}+(P_i^{i+1})^t\Gamma^{-1}_{i}P_i^{i+1}, \quad  T_{i,i+1} = -(P_i^{i+1})^t \Gamma^{-1}_{i}.
	\end{aligned}		
\end{equation*}
Notice,  moreover, that the matrix $T$ is positive definite, since it is a multiple of the quadratic form $\sum_{i=0}^{N} \vert u_i\vert ^2$. For this reason we can define its square root $T^{1/2}$ which is again positive definite and symmetric. 
We will use this observation to relate the determinant of $\Phi_3(s)$ to the characteristic polynomial of a suitable $nN$ matrix.

%\begin{lemma}
%\label{lemma:char_polynomial}
%    Let $W$ be $T^{1/2} \ker Q$ and denote by $\hat T - s \hat Q$ the restriction of $T-sQ$ to $W^\perp$.
%	The function $\det(\Phi_3(s))$ is a polynomial of degree $nN-\sum_{k=1}^N \dim(\ker(Q_k))$ and in particular it is a multiple of $\det(\hat T-s\hat Q)$ (and thus of $\det(T- sQ)$).
%\end{lemma}

\begin{lemma}
\label{lemma:char_polynomial}
   	The function $\det(\Phi_3(s))$ is a polynomial of degree $nN-\sum_{k=1}^N \dim(\ker(Q_k))$ and in particular it is a multiple of $\det(T-s Q)$.
\end{lemma}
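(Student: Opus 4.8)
The function $\det\Phi_3(s)$ is manifestly a polynomial in $s$: each factor is affine in $s$, $M_k(s)=M_k(0)+s\dot M_k$, so every entry of $\Phi(s)=M_N(s)\cdots M_0(s)$ is a polynomial and hence so is the determinant of its lower-left $n\times n$ block. The plan is to prove the sharper statement $\det\Phi_3(s)=c\,\det(T-sQ)$ for some nonzero constant $c$ independent of $s$: granting this, ``multiple of $\det(T-sQ)$'' is immediate, and the degree is read off from $\det(T-sQ)$ in the last step.

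To get the proportionality I would encode the two-point boundary value problem \eqref{eq:jacobi_equation} with $x_0=x_{N+1}=0$ as one homogeneous square linear system $\mathcal H_s z=0$, the unknown $z$ collecting all $\lambda_0,\dots,\lambda_{N+1}$ and the free states $x_1,\dots,x_N$ (so $\mathcal H_s$ is $2n(N+1)\times 2n(N+1)$), and compute $\det\mathcal H_s$ in two ways. Solving the state equation for $\lambda_{k+1}=\Gamma_k^{-1}(x_{k+1}-(1+A_k)x_k)$ and substituting eliminates $\lambda_1,\dots,\lambda_{N+1}$ and then $\lambda_0$ through an $s$-independent invertible elimination, leaving exactly the Euler--Lagrange equations $(T-sQ)x=0$ of the quadratic form $\tfrac12\langle(T-sQ)x,x\rangle$ that represents $\mathcal J_s$ in state coordinates; hence $\det\mathcal H_s$ equals $\det(T-sQ)$ up to a nonzero $s$-independent factor (a product of $\det\Gamma_k$ and $\det(1+A_k)$). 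On the other hand, using the forward form \eqref{eq: systemPMP} to propagate $(\lambda_k,x_k)$ from $x_0=0$ expresses every variable in terms of $\lambda_0$ and leaves the single equation $\Phi_3(s)\lambda_0=0$; since each $M_k(s)$ is symplectic this elimination is $s$-independent and costs only a sign, so $\det\mathcal H_s=\pm\det\Phi_3(s)$. Comparing the two gives $\det\Phi_3(s)=c\,\det(T-sQ)$. (More softly, one can get the divisibility $\det(T-sQ)\mid\det\Phi_3(s)$ without tracking constants: by Lemma~\ref{lemma: jacobi equation} and the remark that $\det\Phi^3_{k+1}(s)=0$ detects degeneracy of $\mathcal J^k_s|_{\ker E^{k+1}}$, one has $\dim\ker\Phi_3(s_0)=\dim\ker(T-s_0Q)$ for every $s_0$; since $\operatorname{ord}_{s_0}\det A(s)\ge\dim\ker A(s_0)$ for any polynomial matrix $A$, whereas $\operatorname{ord}_{s_0}\det(T-sQ)=\dim\ker(T-s_0Q)$ by the next paragraph, the order of $\det\Phi_3(s)$ at each root of $\det(T-sQ)$ is at least that of $\det(T-sQ)$.)

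For the degree, I would use that $T$ is positive definite (it represents $u\mapsto\sum_i|u_i|^2$ in state coordinates), so $T-sQ=T^{1/2}(I-sK)T^{1/2}$ with $K:=T^{-1/2}QT^{-1/2}$ symmetric and $\det(T-sQ)=\det(T)\prod_j(1-s\mu_j)$, the $\mu_j$ being the eigenvalues of $K$. The degree of this polynomial is the number of nonzero $\mu_j$, i.e.\ $\operatorname{rank}K=\operatorname{rank}Q=\sum_{k=1}^N\operatorname{rank}Q_k=nN-\sum_{k=1}^N\dim\ker Q_k$; moreover $\det(T-sQ)\not\equiv 0$, since at $s=0$ it equals $\det T>0$. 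Together with $\det\Phi_3(s)=c\,\det(T-sQ)$ this gives both assertions. I expect the delicate point to be the determinant bookkeeping in the middle step: verifying that the forward elimination via \eqref{eq: systemPMP} contributes only a sign (this is exactly where symplecticity of the $M_k(s)$ enters) and that the momentum elimination reproduces the block-tridiagonal $T-sQ$ on the nose. If one prefers the soft divisibility route instead, the remaining obstacle becomes the matching upper bound $\deg\det\Phi_3(s)\le nN-\sum_k\dim\ker Q_k$, which seems to genuinely require the structural identity of the middle step.
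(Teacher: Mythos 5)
Your proof is correct in substance, but it takes a genuinely different route from the paper's. The paper never assembles the boundary-value problem into one square system: it bounds $\deg\det\Phi_3(s)$ by $nN$, invokes \Cref{lemma: jacobi equation} to identify the zeros of $\det\Phi_3(s)$ with the values of $s$ at which $\mathcal J_s$ degenerates on $\ker E^{N+1}$, i.e.\ with the zeros of $\det(T-sQ)$, computes $\det(T-sQ)=\det T\prod_{i=1}^d(1-s\lambda_i)$ with $d=nN-\sum_k\dim\ker Q_k$, and concludes proportionality from ``same degree, same zero set''. Your two-way elimination of the system encoding \eqref{eq:jacobi_equation} with $x_0=x_{N+1}=0$ yields the proportionality $\det\Phi_3(s)=c\,\det(T-sQ)$ directly, and it buys two things: root multiplicities are matched automatically (a point the paper's zero-set comparison glosses over), and the computation already contains the skeleton of Section~\ref{sec:determinant}, where the paper fixes the constant only a posteriori by evaluating at $s=0$. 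One bookkeeping claim should be corrected, though harmlessly for this lemma: the forward elimination does not cost ``only a sign''. Passing from \eqref{eq:jacobi_equation} to the forward form \eqref{eq: systemPMP} inverts $(P_k^{k+1})^t=(1+A_k)^t$ at every step, so one gets $\det\mathcal H_s=\pm\prod_k\det(1+A_k)^{\pm1}\,\det\Phi_3(s)$ rather than $\pm\det\Phi_3(s)$; symplecticity of $M_k(s)$ is not the operative fact here --- what matters is that the only blocks being inverted ($\Gamma_k$ and $(1+A_k)^t$) are $s$-independent, so the constant $c$ is nonzero and independent of $s$, which is all you actually use. Your degree count via $\operatorname{rank}Q=nN-\sum_k\dim\ker Q_k$ coincides with the paper's.
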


	\begin{proof}
		The first thing we have to notice is that the entries of $\Phi(s)$ are polynomials of degree at most $N$ since $\Phi(s)$ is the product of $N$ affine in $s$ matrices. This implies that $\det(\Phi_3(s))$ has at most degree $nN$. 
		
		This is exactly the dimension of the space of controls. We know by \Cref{lemma: jacobi equation} that this polynomial has at least $d: = n N-\sum_i\dim(\ker Q_i)$ roots. This is because our perturbation $\mathcal{J}_s$ sees every eigenvalue of $\mathcal{J}$ but $1$, whose eigenspace corresponds to the kernel of $Q$. In particular when the $Q_i$ are all invertible, $\det(\Phi_3(s))$ has degree $nN$, vanishes on the same set as $\det(T-sQ)$ and thus they are multiples. In general we have:
		\begin{equation*}
			\begin{split}
				\det(T-sQ) &= \det(T)\det(1-sT^{-1/2}Q T^{-1/2})  \\ &= s^{nN} \det(T)\det(s^{-1}-T^{-1/2}Q T^{-1/2}) \\
				&= \lambda^{-nN}\det(T)\det(\lambda-T^{-1/2}QT^{-1/2})\vert_{\lambda =s^{-1}}.
			\end{split}
		\end{equation*}
	
	   Order the non zero eigenvalues of $T^{-1/2}Q T^{-1/2}$ as $\{\lambda_i\}_{i=1}^d$. Since  $\det(\lambda- T^{-1/2}QT^{-1/2})$  has $d$ roots different from zero and a root of order  $\sum_i\dim(\ker(Q_i))$ at $\lambda =0$ we can rewrite the formula as:
	    \begin{equation*}
	    	\det(T-sQ) = \prod_{i = 1}^{d} (1-s \lambda_i) \det(T) .
	    \end{equation*}
	     Thus we have that $\det(\Phi_3(s))$ and $\det(T-sQ)$ are both polynomials of degree $d$ vanishing on the same set and thus they must be multiples.
\end{proof}

	By Lemma~\ref{lemma:char_polynomial} there exists a constant $C \ne 0$ such that
		$$
		\det (T-sQ) = C\det \Phi_3(s).		
		$$
The constant can be computed by evaluating both sides at $s=0$. Using induction we can prove that
\begin{equation}
	\label{eq:value_Phi(0)}
\Phi(0) = 
\begin{pmatrix}
(P^0_{N+1})^T & 0\\
\sum_{i=0}^{N}P^{N+1}_{i+1}\Gamma_i (P^0_{i+1})^T & P^{N+1}_0
\end{pmatrix}.
\end{equation}
Hence
	    \begin{equation*}
	    	\begin{split}
    	\det(\Phi_3(s))\vert_{s=0} &= \det\left(\sum_{i=0}^{N}P^{N+1}_{i+1}\Gamma_i (P^0_{i+1})^T\right)\\
    	&=\det\left(\sum_{i=0}^{N}P^{N+1}_{i+1}\Gamma_i (P_{i+1}^{N+1})^T\right)\det(P^0_{N+1}).
    \end{split}
    \end{equation*}	
Now it only remains to find $\det T$. Recall that $T$ corresponds to the quadratic form $\frac{1}{2} \sum_{k=0}^{N} \vert u_k\vert^2$.

Given a trajectory of~\eqref{eq: difference eq} with $x_0=x_{N+1}=0$ we can always reconstruct the control. This gives a bijection between $(u_0,\dots,u_{N-1})$ and $(x_1,\dots x_N)$ which can be written using a block-triangular matrix
$$
L = \begin{pmatrix}
B_0^{-1} & 0  & \dots & 0 & 0 \\
-B_1^{-1}P^2_1 & B_1^{-1} & \dots & 0 & 0\\
\vdots & \vdots  & \ddots & \vdots & \vdots\\
0 & 0 & \dots & -B_{N-1}^{-1}P^{N}_{N-1} & B_{N-1}^{-1}
\end{pmatrix}
$$

Now we wish to use $F_N$ (see \cref{eq:F_k_def}) to pull back to $U_{N-1}$ the quadratic form:
$$
\sum_{i=0}^{N}|u_i|^2
$$ 
seen as a quadratic form on $\ker E^{N+1}$. To do so we simply need to resolve for $u_N$ using~\eqref{eq:resolution_using_F} and plug it inside the form above. This gives the form $q(v) = \langle (1+F_N^*F_N)v,v\rangle$ on $U_{N-1}$.  

We can now change coordinates and work with the $x$ variable. One can see that $T$ can be written as a symmetric matrix on $\mathbb R^{nN}\times \mathbb R^{nN}$ of the form:
$$
T = L^T(1+ F_N^*F_N)L.
$$
So if we want to compute its determinant, we need to find determinant of $L$ and of $1+ F_N^* F_N$. We clearly have
$$
\det L = \prod_{i=0}^{N-1} B_i^{-1}.
$$
So it only remains to find $\det (1+ F_N^* F_N)$.

Notice that, since $F_N$ is a surjection, it has a large kernel of dimension $N(n-1)$. If $v\in \ker F_N$, then $v$ is an eigenvector of $q$ with eigenvalue $1$. So the determinant can be computed multiplying the remaining $n$ eigenvalues. We claim that they are all equal to the eigenvalues of $1+F_NF_N^*$. Indeed, if $v\notin \ker F_N$ is an eigenvalue of $q$, then
$$
(1+ F_N^*F_N)v = \lambda v.
$$
Applying $F$ to both sides gives
$$
(1+ F_N F_N^* )F_Nv = \lambda F_Nv. 
$$
Hence we obtain that:
\begin{equation*}
\begin{split}
\det T &= \prod_{i=0}^{N-1}\det(\Gamma_i)^{-1} \\& \times\det \left(1+\sum_{j=0}^{N-1} B_N^{-1}P^{N+1}_{j+1} \Gamma_j^{-1}(P^{N+1}_{j+1})^t (B_N^{-1})^t \right)  \\
&=\prod_{i=0}^{N}\det(\Gamma_i)^{-1}\det \left(\Gamma_N+\sum_{j=0}^{N-1} P^{N+1}_{j+1} \Gamma_j^{-1}(P^{N+1}_{j+1})^t \right). 
\end{split}
\end{equation*}

This implies that $C = \det(P_0^{N+1}) \prod_{k=0}^N\Gamma_k^{-1}$ and thus:
\begin{equation}
	\det(\Phi_3(s))\det(P_0^{N+1}) \prod_{k=0}^{N}\det\Gamma_k^{-1} = \det(T-sQ),
\end{equation}
which gives formula~\eqref{eq:det_formula} when $s=1$.

\section{Some examples and applications}
\label{sec:applications}
\subsection{Tridiagonal Toeplitz Matrices}
\label{sec:toeplitz matrices}
Let us compute the determinant of a symmetric tridiagonal Toeplitz matrix
\begin{equation}
       \label{eq: toeplitz_matrix}
      T_N(s,r) = \begin{pmatrix}
s & r & 0 & \dots & 0 & 0\\
r & s & r & \dots & 0& 0\\
0 & r & s & \dots & 0& 0\\
\vdots & \vdots& \vdots & \ddots & \vdots & \vdots\\
0 & 0 & 0 & \dots & s & r\\
0 & 0 & 0 & \dots & r & s
        \end{pmatrix},
    \end{equation}
where $s,r \in \mathbb{R}$ and $r\neq 0$. 

Recall that eigenvalues of $T_N(s,r)$ are explicitly known \cite{tridiagonal_eigen} and are of the form
$$
\lambda_j = s+ 2|r|\cos \frac{\pi j}{N+1}, \qquad 1 \leq j \leq N. 
$$
So we already a have a way of computing the determinant of the matrix as the product of all eigenvalues:
\begin{equation}
\label{eq:det_toeplitz_eigenvalues}
\det T_N(s,r)=\prod_{j=1}^N \left( s+ 2|r|\cos \frac{\pi j}{N+1}\right).
\end{equation}
Let us compute the same determinant using the control theoretic approach.

Using formulas~\eqref{eq:jacobi_transform} we reformulate the problem of computing $\det T_N(r,s)$ as a LQR problem. We choose for $1\leq i \leq N$:
$$
B_0 = B_i = 1, \quad A_0 = 0, \quad A_i = -(1+r), \quad Q_i= 1+r^2-s. 
$$
In particular, this immediately implies that
$$
\Gamma_i =1, \qquad P^{N+1}_0 = (-r)^N.
$$
After comparing with~\eqref{eq:det_formula}, we see that only $\Phi_3$ needs to be computed. In this particular case
$$
\Phi = 
\begin{pmatrix}
-r^{-1} & r^{-1}(1+r^2 - s) \\
-r^{-1} & r^{-1}(1 - s) 
\end{pmatrix}^N
\begin{pmatrix}
1 & 0\\
1 & 1
\end{pmatrix}.
$$
Denote 
$$
M=\begin{pmatrix}
-r^{-1} & r^{-1}(1+r^2 - s) \\
-r^{-1} & r^{-1}(1 - s) 
\end{pmatrix}.
$$
The eigenvalues of this matrix are given by
\begin{equation}
\label{eq:eigenvalues_toeplitz}
\mu_{\pm}=\frac{-s\pm \sqrt{s^2-4r^2}}{2r}.
\end{equation}
Assume that $\mu_+ \neq \mu_-$. Then we can diagonalize $M$ using the matrix
$$
S= \begin{pmatrix}
1+r\mu_- & 1+r\mu_+\\
1 & 1
\end{pmatrix},
$$
as can be verified by a direct computation. Thus
$$
\Phi = 
S\begin{pmatrix}
\mu_+^N & 0 \\
0 & \mu_-^N 
\end{pmatrix} S^{-1}
\begin{pmatrix}
1 & 0\\
1 & 1
\end{pmatrix}.
$$
After all of the matrices are multiplied, one can take the element which lies under the diagonal and use formula~\eqref{eq:det_formula}. This way, after some simplification, we obtain an expression which can be compactly written in terms of the two eigenvalues
\begin{equation}
\label{eq:det_Toeplitz_control}
\det T_N(s,r) = (-r )^N\frac{(\mu_+^{1+N}-\mu_-^{1+N})}{(\mu_+-\mu_-)}= (-r )^N \sum_{j=0}^N \mu_+^j \mu_-^{N-j}.
\end{equation}
If $\mu_+ = \mu_- = \mu$, then using the continuity of the determinant this formula reduces to
$$
\det T_N(s,r) = N (-r \mu)^N.
$$

\subsection{Generalized Euler's identity}
\label{sec:euler identity}
The formula for the determinant of a tridiagonal matrix has been known for a long time. However, we would like to underline that control theoretic approach can be conceptually advantageous. To illustrate this claim, we now prove the generalized Euler's identity~\eqref{eq:euler_agrachev}.

Notice that we get the classical Euler's identity if
$a=0$:
$$
\frac{\sin b}{b} = \prod_{j=1}^\infty\left(1-\frac{b^2}{ (\pi j)^2} \right).
$$
The identity~\eqref{eq:euler_agrachev} appeared in \cite{determinant}. Here we give a proof which uses only elementary tools from linear algebra and analysis. The idea is to take a certain sequence of tridiagonal Toeplitz matrices of increasing rank and to compute the asymptotic expansion of the determinants in two different ways: using formula~\eqref{eq:det_toeplitz_eigenvalues} and~\eqref{eq:det_Toeplitz_control}.

We start with the following family of one-dimensional LQR problems
$$
\dot{x}= ax+u, \quad \frac{1}{2}\int_0^1 \left(u(t)^2 -\varepsilon(a^2+b^2) x^2(t)\right)dt\to \min.
$$
and $x(0)=x(1)=0$. Here $\varepsilon\in \{0,1\}$. Morally we want to compute the determinant of the quadratic functional restricted to the curves which satisfy the constraints. The difficulty here is that the functional in question is a quadratic form on an infinite-dimensional space. We will compute the determinant using finite-dimensional approximations coming from discretizations of this continuous LQR problem with time step equal to $1/N$. We consider discrete LQR problems of the form
$$
x_{k+1}-x_k= \frac{1}{N}(ax_k + u_k), \,\, \frac{1}{2N}\sum_{k=0}^N u_k^2 -\varepsilon (a^2+b^2) x_k^2 \to \min.
$$
In order to apply our formula we introduce a new control function $v= \sqrt{N} u$, which gives an equivalent optimal control problem:
$$
x_{k+1}-x_k= \frac{a}{N}x_k+\frac{v_k}{\sqrt{N}}, \quad \frac{1}{2}\sum_{k=0}^N v_k^2 -\varepsilon\frac{a^2+b^2}{N} x^2 \to \min.
$$
Hence we get as parameters of the LQR problem
\begin{align*}
A_k= aN^{-1}, \quad B_k= N^{-1/2},\\ Q_k = \varepsilon(a^2+b^2)N^{-1}, \quad \Gamma_k = N^{-1}.
\end{align*}
We can now use formulas~\eqref{eq:jacobi_transform} to show that we are actually computing the determinants of a tridiagonal Toeplitz matrices $T_N(r,s)$ from the previous subsection with parameters:
$$
r=-(a+N), \qquad s = N + N\left(1+\frac{a}{N}\right)^2 - \varepsilon\frac{a^2+b^2}{N}.
$$
Let us denote those matrices as $\hat{T}_\varepsilon(a,b,N)$.

As discussed previously we want to compute $\det \hat{T}_\varepsilon(a,b,N)$ in two different ways and then take the limit $N\to \infty$. Foreshadowing a bit the computation, the sequence $\det \hat{T}_\varepsilon(a,b,N)$ does not converge. So instead we compute the limit
$$
\lim_{N \to \infty}\frac{\det \hat{T}_1(a,b,N)}{\det \hat{T}_0(a,b,N)}.
$$
There are two motivations for using exactly this ratio. First of all we can notice from formula~\eqref{eq:det_Toeplitz_control} that if two systems have the same matrices $A_k$ and $B_k$, then the ratio of the corresponding determinants depends only on the associated discrete flows $\Phi$. Since we have started with an approximation to a continuous system, it is natural to expect that the flows will converge and this way we obtain a finite quantity. Another reason is the celebrated Gelfand-Yaglom formula in quantum mechanics \cite{dunne}, which computes the ratio of determinants of a 1D quantum particle with a potential and a 1D free quantum particle as a ratio of determinants of two finite-rank matrices.

Let us first use formula~\eqref{eq:det_Toeplitz_control} to compute the ratio. The two eigenvalues of the flow matrix now depend on $a,b,N,\varepsilon$. To keep formulas clean we omit the dependence on $a,b,N$ in our notations and indicate explicit dependence on $\varepsilon$ through a sub-index or super-index. We also assume that $a>0$, $b>0$. Other sign choices will be a direct consequence of~\eqref{eq:euler_agrachev} itself.

First note that $r$ is independent of $\varepsilon$. If we write $\Delta_\varepsilon = s_\varepsilon^2-r^2$, then the formula~\eqref{eq:eigenvalues_toeplitz} for eigenvalues reads as
$$
\mu^{\varepsilon}_\pm = \frac{-s_\varepsilon \pm \sqrt{\Delta_\varepsilon}}{2r}.
$$
Formula~\eqref{eq:det_Toeplitz_control} in this notations reads as
\begin{align*}
\frac{\det \hat{T}_1(a,b,N)}{\det \hat{T}_0(a,b,N)}=\frac{\left((\mu_+^1)^{1+N}-(\mu_-^1)^{1+N}\right)}{\left((\mu_+^0)^{1+N}-(\mu_-^0)^{1+N}\right)}\frac{(\mu_+^0 - \mu_-^0)}{(\mu_+^1 - \mu_-^1)} \\
=\frac{\sqrt{\Delta_0}}{\sqrt{\Delta_1}}\frac{\left((-s_1 + \sqrt{\Delta_1})^{1+N}-(-s_1 - \sqrt{\Delta_1})^{1+N}\right)}{\left((-s_0 + \sqrt{\Delta_0})^{1+N}-(-s_0 - \sqrt{\Delta_0})^{1+N}\right)}.
\end{align*}

Plugging formulas for $r$ and $s_\varepsilon$ we find that 
$$
\Delta_\varepsilon =  4(1-\varepsilon)a^2 -4 \varepsilon b^2 + O(N^{-1}), \qquad N\to \infty.
$$
Hence
$$
\lim_{N\to \infty }\frac{\sqrt{\Delta_0}}{\sqrt{\Delta_1}} = \frac{2a}{2bi}.
$$
Similarly we find that
$$
s_\varepsilon = 2N + O(1), \qquad N\to \infty.
$$
Hence
$$
\lim_{N\to \infty } \frac{(-s_1\pm\sqrt{\Delta_1})^{1+N}}{(2N)^{1+N}} = e^{\pm ib} 
$$
and
$$
\lim_{N\to \infty } \frac{(-s_0\pm\sqrt{\Delta_0})^{1+N}}{(2N)^{1+N}} = e^{\pm a}. 
$$
Thus after collecting everything together, we find
$$
\lim_{N \to \infty}\frac{\det \hat{T}_1(a,b,N)}{\det \hat{T}_0(a,b,N)} = \frac{2a}{2bi}\frac{e^{ib}-e^{-ib}}{e^a-e^{-a}} = \frac{a \sin b}{b\sinh a } 
$$
which is exactly the left side of~\eqref{eq:euler_agrachev}.

Now we use formula~\eqref{eq:det_toeplitz_eigenvalues} to compute the same ratio. We get
\begin{align*}
&\frac{\prod_{j=1}^N \left( N + N(1+\frac{a}{N})^2+ 2(a+N)\cos \frac{\pi j}{N+1}-\frac{a^2+b^2}{N}\right)}{\prod_{j=1}^N \left( N + N(1+\frac{a}{N})^2+ 2(a+N)\cos \frac{\pi j}{N+1}\right)} \\
&=\frac{\det \hat{T}_1(a,b,N)}{\det \hat{T}_0(a,b,N)}.
\end{align*}

Expanding and simplifying the denominator gives us
\begin{align*}
N + N\left(1+\frac{a}{N}\right)^2+ 2(a+N)\cos \frac{\pi j}{N+1} \\ = \frac{1}{N}\left(a^2+4(N^2+aN)\cos^2 \frac{\pi j}{2(N+1)}\right).
\end{align*}
Hence
$$
\frac{\det \hat{T}_1(a,b,N)}{\det \hat{T}_0(a,b,N)} = \prod_{j=1}^N \left(1- \frac{a^2+b^2}{a^2+4(N^2+aN)\cos^2 \frac{\pi j}{2(N+1)}} \right).
$$
Changing the summation index $j\mapsto N+1 - j$ gives
$$
\frac{\det \hat{T}_1(a,b,N)}{\det \hat{T}_0(a,b,N)} = \prod_{j=1}^N \left(1- \frac{a^2+b^2}{a^2+4(N^2+aN)\sin^2 \frac{\pi j}{2(N+1)}} \right). 
$$

Since $ \sin(y) \le \vert y\vert $, we have the following inequality:
\begin{equation*}
\frac{4(N^2+aN)}{(\pi j)^2}\sin^2 \frac{\pi j}{2(N+1)} \le 1+\frac{(a-2)N-1}{4(N+1)^2} 
\end{equation*}
Thus for any $\epsilon>0$ there exists $N_0\in \mathbb{N}$  such that $ \forall N\ge N_0$:
\begin{equation}
	\label{eq:estimate_tail}
	 \frac{a^2+b^2}{a^2+4(N^2+aN)\sin^2 \frac{\pi j}{2(N+1)}} \ge \frac{a^2+b^2}{a^2 +(1+\epsilon)(\pi j)^2}
\end{equation}
Define the following partial product: 
$$
\Pi_{k}^m(N) = \prod_{j=k}^m \left(1- \frac{a^2+b^2}{a^2+4(N^2+aN)\sin^2 \frac{\pi j}{2(N+1)}} \right).
$$

By continuity we have that:
\begin{align*}
	\lim_{N\mapsto \infty} \Pi_{0}^N(N) = \lim_{N \mapsto \infty} \Pi_0^{N_0}(N) \lim_{N\to\infty} \Pi_{N_0}^N(N)\\
	= \prod_{j=0}^{N_0} \left(1-\frac{a^2+b^2}{a^2+(\pi j)^2}\right) \lim_{N\to\infty} \Pi_{N_0}^N(N).
\end{align*}
Since the second factor is the tail of a convergent product, thanks to the estimates in \cref{eq:estimate_tail}, it converges to $1$ when we take the limit for $N_0\to \infty$ and get:
\begin{equation*}
	\lim_{N_0\to\infty} \prod_{j=0}^{N_0} \left(1-\frac{a^2+b^2}{a^2+(\pi j)^2}\right) = \lim_{N \mapsto \infty} \Pi_0^N(N).
\end{equation*}
 Thus we have proven~\eqref{eq:euler_agrachev}.

\bibliographystyle{plain}
\bibliography{ref}

\end{document}